\DeclarePairedDelimiter{\ceil}{\lceil}{\rceil}
\renewcommand{\leq}{\leqslant}
\renewcommand{\geq}{\geqslant}
\DeclareMathOperator*{\argmin}{\mathrm{arg\,min}}
\newtheorem{theorem}{Theorem}[section]
\newtheorem{corollary}[theorem]{Corollary}
\newtheorem{fact}[theorem]{Fact}
\newtheorem{lemma}[theorem]{Lemma}
\newtheorem{remark}[theorem]{Remark}
\newtheorem{example}[theorem]{Example}
\newenvironment{oracle}[1][htb]{%
    \renewcommand{\ALG@name}{Oracle}
   \begin{algorithm}[#1]%
  }{\end{algorithm}}
\begin{document}

\begin{center}
 {\LARGE Blended Matching Pursuit}
\end{center}

\vspace{6mm}

\noindent\textbf{Cyrille W.~Combettes}\hfill\href{mailto:cyrille@gatech.edu}{\ttfamily cyrille@gatech.edu}\\
{\small\emph{Georgia Institute of Technology}\\
\emph{Atlanta, GA, USA}}\\
\\
\textbf{Sebastian Pokutta}\hfill\href{mailto:pokutta@zib.de}{\ttfamily pokutta@zib.de}\\
{\small\emph{Zuse Institute Berlin and Technische Universit\"at Berlin}\\
\emph{Berlin, Germany}}

\vspace{5mm}

\begin{center}
\begin{minipage}{0.85\textwidth}
\begin{center}
 \textbf{Abstract}
\end{center}
 {\small Matching pursuit algorithms are an important class of algorithms in signal processing and machine learning. We present a \emph{blended matching pursuit} algorithm, combining coordinate descent-like steps with stronger gradient descent steps, for minimizing a smooth convex function over a linear space spanned by a set of atoms. We derive sublinear to linear convergence rates according to the smoothness and sharpness orders of the function and demonstrate computational superiority of our approach. In particular, we derive linear rates for a large class of non-strongly convex functions, and we demonstrate in experiments that our algorithm enjoys very fast rates of convergence and wall-clock speed while maintaining a sparsity of iterates very comparable to that of the (much slower) orthogonal matching pursuit.}
\end{minipage}
\end{center}

\vspace{0mm}

\section{Introduction}

Let $\mathcal{H}$ be a separable real Hilbert space, $\mathcal{D}\subset\mathcal{H}$ be a dictionary, and $f:\mathcal{H}\rightarrow\mathbb{R}$ be a smooth convex function. In this paper, we aim at solving the problem:
\begin{align}
 \text{Find a solution to }\min\limits_{x\in\mathcal{H}}f(x)\text{ which is sparse relative to }\mathcal{D}.
 \label{pb}
\end{align}
Together with fast convergence, achieving high sparsity, i.e., keeping the iterates as linear combinations of a \emph{small} number of atoms in the dictionary $\mathcal{D}$, is a primary objective and leads to better generalization, interpretability, and decision-making in machine learning. In signal processing, Problem~\eqref{pb} encompasses a wide range of applications, including compressed sensing, signal denoising, and information retrieval, and is often solved with the Matching Pursuit algorithm \citep{mallat93mp}. Our approach is inspired by the Blended Conditional Gradients algorithm \citep{pok18bcg} which solves the constrained setting of Problem~\eqref{pb}, i.e., minimizing $f$ over the convex hull $\operatorname{conv}(\mathcal{D})$ of the dictionary, and is ultimately based on the Frank-Wolfe algorithm \citep{fw56} a.k.a.~Conditional Gradient algorithm \citep{polyak66cg}. As introduced in \citet{pok17lazy}, \citet{pok18bcg} enhanced the vanilla Frank-Wolfe algorithm by replacing the linear minimization oracle with a \emph{weak-separation oracle} and by blending the traditional Frank-Wolfe steps with \emph{lazified} Frank-Wolfe steps and projected gradient steps, while still avoiding projections. Frank-Wolfe algorithms are particularly well-suited for problems with a desired sparsity in the solution (see, e.g., \citet{jaggi13fw} and the references therein) however, from an optimization perspective, although they approximate the optimal descent direction $-\nabla f(x_t)$ via the linear minimization oracle $v_t^\text{FW}\leftarrow\argmin_\mathcal{D}\langle\nabla f(x_t),v\rangle$, they move in the direction $v_t^\text{FW}-x_t$ in order to ensure feasibility, which provides less progress.\\ 

An analogy between Frank-Wolfe algorithms and the unconstrained Problem~\eqref{pb} was proposed by \citet{locatello17mpfw}. They unified the Frank-Wolfe and Matching Pursuit algorithms, and proposed a Generalized Matching Pursuit algorithm (GMP) and an Orthogonal Matching Pursuit algorithm (OMP) for solving Problem~\eqref{pb}, which descend in the directions $v_t^\text{FW}$. Essentially, \citet{locatello17mpfw} established that GMP corresponds to the vanilla Frank-Wolfe algorithm and OMP corresponds to the Fully-Corrective Frank-Wolfe algorithm. GMP and OMP converge with similar rates in the various regimes, namely with a sublinear rate for smooth convex functions and with a linear rate for smooth strongly convex functions, however they have different advantages: GMP converges (much) faster in wall-clock time while OMP offers (much) sparser iterates. The interest in these algorithms stems from the fact that they work in the general setting of smooth convex functions in Hilbert spaces and that their convergence analyses do not require incoherence or restricted isometry properties (RIP, \citet{candes05rip}) of the dictionary, which are quite strong assumptions from an optimization standpoint. For an in-depth discussion of the advantages of GMP and OMP over other methods, e.g., in \citet{tropp04}, \citet{grib06}, \citet{dav10}, \citet{shalev10}, \citet{tem13,tem14,tem15}, \citet{tib15}, \citet{yao16}, and \citet{nguyen17}, we refer the interested reader to \citet{locatello17mpfw}. In a follow-up work, \citet{locatello18mpcd} presented an Accelerated Matching Pursuit algorithm, which we compare our approach to as well.\\

We aim at unifying the best of GMP (speed) and OMP (sparsity) into a single algorithm by blending them strategically. However, while the overall idea is reasonably natural, we face considerable challenges as many important features of Frank-Wolfe methods do not apply anymore in the Matching Pursuit setting and cannot be as easily overcome as in \citet{locatello17mpfw}, requiring a different analysis. For example, Frank-Wolfe (duality) gaps are not readily available but they are crucial in monitoring the blending, and further key components, such as the weak-separation oracle, require modifications.

\paragraph{Contributions.} We propose a \emph{Blended Matching Pursuit} algorithm (BMP), a fast and sparse first-order method for solving Problem~\eqref{pb}. Our method unifies the best of GMP (speed) and OMP (sparsity) into one algorithm, which is of fundamental interest for practitioners. We establish a continuous range of convergence rates between $\mathcal{O}(1/\epsilon^p)$ and $\mathcal{O}(\ln 1/\epsilon)$, where $\epsilon>0$ is the desired accuracy and $p>0$ depends on the properties of the function. In particular, we derive linear rates of convergence for a large class of smooth convex but non-strongly convex functions. Lastly, we demonstrate the computational superiority of BMP over state-of-the-art methods, with BMP converging the fastest in wall-clock time while maintaining its iterates at close-to-optimal sparsity, and this without requiring sparsity-inducing constraints.

\paragraph{Outline.} We introduce notions and notation in Section~\ref{sec:pre}. We present the Blended Matching Pursuit algorithm in Section~\ref{sec:bgmp} with the convergence analyses in Section~\ref{cv}. Computational experiments are provided in Section~\ref{sec:experiments}. Additional experiments and results can be found in the Appendix.

\section{Preliminaries}
\label{sec:pre}

We work in a separable real Hilbert space $(\mathcal{H},\langle\cdot,\cdot\rangle)$ with induced norm $\|\cdot\|$. A set $\mathcal{D}\subset\mathcal{H}$ of normalized vectors is a \emph{dictionary} if it is at most countable and $\operatorname{cl}(\operatorname{span}(\mathcal{D}))=\mathcal{H}$, and in this case its elements are referred to as \emph{atoms}. For any set $\mathcal{S}\subseteq\mathcal{H}$, let $\mathcal{S}' \coloneqq \mathcal{S}\cup-\mathcal{S}$ denote the \emph{symmetrization of $\mathcal{S}$} and $D_\mathcal{S} \coloneqq \sup_{u,v\in\mathcal{S}}\|u-v\|$ denote the \emph{diameter of $\mathcal{S}$}. Let $\operatorname{int}(\mathcal{S})\coloneqq\{x\in\mathcal{S}\mid\exists r>0:\,\mathcal{B}(x,r)\subseteq\mathcal{S}\}$ denote the \emph{interior of $\mathcal{S}$}, where $\mathcal{B}(x,r)\coloneqq\{y\in\mathcal{H}\mid\|y-x\|<r\}$ denotes the \emph{open ball of radius $r>0$ centered at $x$}. If $\mathcal{S}$ is closed and convex, let $\operatorname{proj}_{\mathcal{S}}$ denote the \emph{orthogonal projection onto $\mathcal{S}$} and $\operatorname{dist}(\cdot,\mathcal{S})\coloneqq\|\operatorname{id}-\operatorname{proj}_\mathcal{S}\|$ denote the \emph{distance to $\mathcal{S}$}. For Problem~\eqref{pb} to be feasible, we will assume $f$ to be \emph{coercive}, i.e., $\lim_{\|x\|\rightarrow+\infty}f(x)=+\infty$. Since $f$ is convex, this is actually a mild assumption when $\argmin_\mathcal{H}f\neq\varnothing$. Lastly, for $i,j\in\mathbb{N}$, the brackets $\llbracket i,j\rrbracket$ denote the set of integers between (and including) $i$ and $j$.\\

Let $f:\mathcal{H}\rightarrow\mathbb{R}$ be a Fr\'echet differentiable function. In the following, we use extended notions of smoothness and strong convexity by introducing \emph{orders}, and we weaken and generalize the notion of strong convexity to that of \emph{sharpness} (see, e.g., \citet{alex17sharp} and \citet{kerdreux2018restarting} for recent work). We say that $f$ is:
\begin{enumerate}[label=(\roman*)]
\item \emph{smooth of order $\ell>1$} if there exists $L>0$ such that for all $x,y\in\mathcal{H}$,
\begin{align*}
 f(y)-f(x)-\langle\nabla f(x),y-x\rangle\leq\frac{L}{\ell}\|y-x\|^\ell,
\end{align*}
\item \emph{strongly convex of order $s>1$} if there exists $S>0$ such that for all $x,y\in\mathcal{H}$,
\begin{align*}
 f(y)-f(x)-\langle\nabla f(x),y-x\rangle\geq\frac{S}{s}\|y-x\|^s,
\end{align*}
\item\label{def:sharp} \emph{sharp of order $\theta\in\left]0,1\right[$ on $\mathcal{K}$} if $\mathcal{K}\subset\mathcal{H}$ is a bounded set, $\varnothing\neq\argmin_\mathcal{H}f\subset\operatorname{int}(\mathcal{K})$, and there exists $C>0$ such that for all $x\in\mathcal{K}$,
\begin{align*}
\operatorname{dist}\left(x,\argmin_\mathcal{H}f\right)
 \leq C\left(f(x)-\min_\mathcal{H}f\right)^\theta.
\end{align*}
\end{enumerate}
If needed, we may specify the constants by introducing $f$ as \emph{$L$-smooth}, \emph{$S$-strongly convex}, or \emph{$C$-sharp}. The following fact, whose result was already used in \citet{nem85opt}, provides a bound on the sharpness order of a smooth function.

\begin{fact}
\label{fact:upper}
Let $f:\mathcal{H}\rightarrow\mathbb{R}$ be smooth of order $\ell>1$, convex, and sharp of order $\theta\in\left]0,1\right[$ on $\mathcal{K}$. Then $\theta\in\left]0,1/\ell\right]$.
\end{fact}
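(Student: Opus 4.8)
The plan is to pit the smoothness upper bound on $f(x)-\min_\mathcal{H}f$ against the sharpness lower bound on that same quantity, both written in terms of $\operatorname{dist}(x,\argmin_\mathcal{H}f)$, and then let $x$ approach the minimizer set so that only the exponents survive. First I would fix $x\in\mathcal{K}$ and let $x^\star$ be its projection onto $\argmin_\mathcal{H}f$; this set is nonempty by hypothesis and is closed and convex since $f$ is convex and continuous, so the projection exists. As $x^\star$ is a global minimizer of the convex differentiable function $f$, we have $\nabla f(x^\star)=0$. Applying smoothness of order $\ell$ with base point $x^\star$ and target $x$ then gives
$$f(x)-\min_\mathcal{H}f\leq\frac{L}{\ell}\|x-x^\star\|^\ell=\frac{L}{\ell}\operatorname{dist}(x,\argmin_\mathcal{H}f)^\ell.$$

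Next I would substitute this into the sharpness inequality. Writing $d\coloneqq\operatorname{dist}(x,\argmin_\mathcal{H}f)$, sharpness reads $d\leq C(f(x)-\min_\mathcal{H}f)^\theta$, and combining it with the display above yields
$$d\leq C\left(\frac{L}{\ell}\right)^\theta d^{\ell\theta}.$$
Hence for every $x\in\mathcal{K}$ with $d>0$ we obtain $d^{1-\ell\theta}\leq C(L/\ell)^\theta$, a bound whose right-hand side is independent of $x$. If we had $\theta>1/\ell$, i.e.\ $1-\ell\theta<0$, then letting $d\to0^+$ would drive the left-hand side to $+\infty$, a contradiction; therefore $\theta\leq1/\ell$, which is the claim.

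The one step requiring care — and the main obstacle — is guaranteeing that points $x\in\mathcal{K}$ exist with $d$ arbitrarily small yet strictly positive, so that the passage $d\to0^+$ is legitimate rather than vacuous (if $\argmin_\mathcal{H}f$ had nonempty interior, nearby points could all satisfy $d=0$). I would first discard the degenerate case $\argmin_\mathcal{H}f=\mathcal{H}$, which forces $f$ constant and is incompatible with $\argmin_\mathcal{H}f\subset\operatorname{int}(\mathcal{K})$ for a bounded $\mathcal{K}$. Otherwise, pick any $x_0\notin\argmin_\mathcal{H}f$ and any $x^\star\in\argmin_\mathcal{H}f\subset\operatorname{int}(\mathcal{K})$, and travel along the segment $[x^\star,x_0]$, setting $\lambda_1\coloneqq\sup\{\lambda:f((1-\lambda)x^\star+\lambda x_0)=\min_\mathcal{H}f\}$. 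The endpoint of the flat portion is again a minimizer, hence interior to $\mathcal{K}$, so points just beyond $\lambda_1$ lie in $\mathcal{K}$, fail to be minimizers (giving $d>0$), and converge to a point of $\argmin_\mathcal{H}f$ (giving $d\to0$). This produces the required sequence and closes the argument.
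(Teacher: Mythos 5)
Your proof is correct and follows essentially the same route as the paper's: apply smoothness at the projection point $x^\star$ (using $\nabla f(x^\star)=0$) and combine with sharpness to obtain $d^{1-\ell\theta}\leq C\left(L/\ell\right)^\theta$ for $d=\operatorname{dist}(x,\argmin_\mathcal{H}f)>0$, then let $d\to0^+$. The only difference is that you explicitly justify the limiting step --- constructing points of $\mathcal{K}$ at arbitrarily small positive distance from $\argmin_\mathcal{H}f$ via the segment argument --- whereas the paper simply asserts that ``$x$ can be arbitrarily close to $x^*$''; this is a welcome tightening of a detail the paper glosses over, not a different method.
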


\begin{proof}
Let $x\in\mathcal{K}\backslash\argmin_\mathcal{H}f$ and $x^*\coloneqq\operatorname{proj}_{\argmin_\mathcal{H}f}(x)$. By sharpness, smoothness, and $\nabla f(x^*)=0$,
\begin{align*}
\operatorname{dist}\left(x,\argmin_\mathcal{H}f\right)
=\|x-x^*\|&\leq C(f(x)-f(x^*))^\theta
\leq C\left(\frac{L}{\ell}\right)^\theta\|x-x^*\|^{\ell\theta}.
\end{align*}
Therefore,
\begin{align*}
 \displaystyle\frac{1}{C}\left(\frac{\ell}{L}\right)^\theta
 \leq\|x-x^*\|^{\ell\theta-1}.
\end{align*}
As the left-hand side is constant and $x$ can be arbitrarily close to $x^*$, we conclude that $\ell\theta\leq1$.
\end{proof}

\subsection{On sharpness and strong convexity}

Notice that if $f:\mathcal{H}\rightarrow\mathbb{R}$ is Fr\'echet differentiable and strongly convex of order $s>1$, then $\operatorname{card}(\argmin_\mathcal{H}f)=1$. Let $\{x^*\}\coloneqq\argmin_\mathcal{H}f$. It follows directly from $\nabla f(x^*)=0$ that for any bounded set $\mathcal{K}\subset\mathcal{H}$ such that $x^*\in\operatorname{int}(\mathcal{K})$, $f$ is sharp of order $\theta=1/s$ on $\mathcal{K}$. Thus, strong convexity implies sharpness. However, not every sharp function is strongly convex; moreover, the next example shows that not every sharp and convex function is strongly convex.

\begin{example}[(Distance to a convex set)]
 Let $\mathcal{C}\subset\mathcal{H}$ be a nonempty, closed, and bounded convex set, and $\mathcal{K}\subset\mathcal{H}$ be a bounded set such that $\mathcal{C}\subset\operatorname{int}(\mathcal{K})$. The function $f:x\in\mathcal{H}\mapsto\operatorname{dist}(x,\mathcal{C})^2=\|x-\operatorname{proj}_\mathcal{C}(x)\|^2$ is convex, and it is sharp of order $\theta=1/2$ on $\mathcal{K}$. Indeed, since $\argmin_\mathcal{H}f=\mathcal{C}$ and $\min_\mathcal{H}f=0$, we have for all $x\in\mathcal{K}$,
 \begin{align*}
  \operatorname{dist}\left(x,\argmin_\mathcal{H}f\right)
  =\|x-\operatorname{proj}_\mathcal{C}(x)\|
  =\left(f(x)-\min_\mathcal{H}f\right)^{1/2}.
 \end{align*}
 Now, suppose $\mathcal{C}$ contains more than one element. Then, $f$ has more than one minimizer. However, a function that is strongly convex of order $s>1$ has no more than one minimizer. Therefore, $f$ cannot be strongly convex of order $s$, for all $s>1$. Notice that $f$ is also a smooth function, of order $\ell=2$.
\end{example} 

Hence, sharpness is a more general notion of strong convexity. It is a local condition around the optimal solutions while strong convexity is a global condition. In fact, building on the \L ojasiewicz inequality of \citet{lojo63}, \citep[Equation~(15)]{bolte07lojo} showed that sharpness always holds in finite dimensional spaces for reasonably well-behaved convex functions; see Lemma~\ref{lem:bolte}. Polynomial convex functions, the $\ell_p$-norms, the Huber loss (see Appendix~\ref{huber}), and the rectifier ReLU are simple examples of such functions.

\begin{lemma}
\label{lem:bolte}
Let $f:\mathbb{R}^n\rightarrow\left]-\infty,+\infty\right]$ be a lower semicontinuous, convex, and subanalytic function with $\{x\in\mathbb{R}^n\mid0\in\partial f(x)\}\neq\varnothing$. Then for any bounded set $\mathcal{K}\subset\mathbb{R}^n$, there exists $\theta\in\left]0,1\right[$ and $C>0$ such that for all $x\in\mathcal{K}$,
\begin{align*}
\operatorname{dist}\left(x,\argmin_{\mathbb{R}^n}f\right)\leq C\left(f(x)-\min_{\mathbb{R}^n}f\right)^\theta.
\end{align*}
\end{lemma}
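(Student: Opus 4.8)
The statement is the Kurdyka--\L ojasiewicz / sharpness inequality for convex subanalytic functions, so my plan is to reduce it to the nonsmooth \L ojasiewicz subgradient inequality of \citet{bolte07lojo} and then upgrade that gradient-type inequality to the desired error bound using convexity. Replacing $f$ by $f-\min_{\mathbb{R}^n}f$, I may assume $\min_{\mathbb{R}^n}f=0$; write $Z\coloneqq\argmin_{\mathbb{R}^n}f$, which is nonempty by hypothesis and closed and convex because $f$ is lower semicontinuous and convex. The goal then becomes to produce $\theta\in\left]0,1\right[$ and $C>0$ with $\operatorname{dist}(x,Z)\leq C\,f(x)^\theta$ for all $x\in\mathcal{K}$.

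First I would localize the problem. Fix $r>0$ and split $\mathcal{K}$ into the points with $\operatorname{dist}(x,Z)\geq r$ and those with $\operatorname{dist}(x,Z)<r$. On the first region, which is compact after passing to $\operatorname{cl}(\mathcal{K})$, the function $f$ is continuous and strictly positive, hence bounded below by some $m_r>0$, while $\operatorname{dist}(x,Z)\leq D_\mathcal{K}$; so the inequality holds there for any $\theta$ once $C$ is chosen large enough. It therefore suffices to establish the estimate on a neighborhood of the compact set $Z\cap\operatorname{cl}(\mathcal{K})$.

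Next I would apply the subanalytic \L ojasiewicz inequality of \citet{bolte07lojo}: at each $\bar x\in Z$ there is a neighborhood and exponents such that $f(x)^{1-\theta_{\bar x}}\leq C_{\bar x}\operatorname{dist}(0,\partial f(x))$ for $x$ near $\bar x$. Covering the compact set $Z\cap\operatorname{cl}(\mathcal{K})$ by finitely many such neighborhoods and retaining the smallest exponent $\theta$ (with a compatible constant) yields a uniform subgradient inequality on a neighborhood of $Z\cap\operatorname{cl}(\mathcal{K})$. To convert this into the error bound I would invoke the standard consequence of the Kurdyka--\L ojasiewicz property for convex functions: the inequality forces the subgradient trajectories (or a talweg argument along segments) to have length controlled by the desingularizing function $\varphi(t)=\tfrac{C}{\theta}t^{\theta}$; since $f$ is convex every such trajectory converges to a point of $Z$, so $\operatorname{dist}(x,Z)\leq\text{length}\leq\tfrac{C}{\theta}f(x)^{\theta}$, which is the claimed sharpness after renaming constants.

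The main obstacle I anticipate is precisely this last conversion together with the uniformization across $Z\cap\operatorname{cl}(\mathcal{K})$: the inequality of \citet{bolte07lojo} is genuinely local with an a priori point-dependent exponent, and the nonsmooth subgradient-flow argument needed to turn it into a distance estimate must be carried out for a merely lower semicontinuous convex function rather than a $C^1$ one. Compactness of $Z\cap\operatorname{cl}(\mathcal{K})$ lets me select a single worst-case exponent $\theta\in\left]0,1\right[$ and a uniform constant, but checking that the trajectories remain inside the region where the uniform inequality is valid, and that their endpoints indeed land in $Z$, is where the care is required.
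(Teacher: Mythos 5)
The first thing to note is that the paper never proves Lemma~\ref{lem:bolte}: it is imported verbatim from \citet{bolte07lojo} (Equation~(15) there), so there is no in-paper argument to compare against, and your proposal is in effect an attempt to reconstruct the proof of the cited result. The route you sketch is essentially the one the literature takes: split $\mathcal{K}$ by compactness into a region far from $Z\coloneqq\argmin_{\mathbb{R}^n}f$, where the bound holds trivially with a large constant (note $f$ is only lower semicontinuous, not continuous, but inf-attainment of lsc functions on compact sets is all you need there), invoke the local nonsmooth \L ojasiewicz/KL subgradient inequality of \citet{bolte07lojo} around each minimizer, uniformize over a finite subcover, and then use convexity to convert the subgradient inequality into a distance estimate. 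So in spirit your proof follows the same path as the result the paper outsources.

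The caveat is that the step you defer --- ``KL inequality $\Rightarrow$ error bound'' via subgradient trajectories --- is the entire mathematical content of the lemma, not a routine verification. (The opposite direction is the easy one for convex $f$: $f(x)-\min_{\mathbb{R}^n}f\leq\langle g,x-\operatorname{proj}_Z(x)\rangle\leq\|g\|\operatorname{dist}(x,Z)$ for $g\in\partial f(x)$ shows an error bound implies the KL inequality; the direction you need admits no such one-line argument.) Carrying it out for a merely lsc, extended-real-valued convex $f$ requires the existence and regularity theory of the subgradient flow (Br\'ezis), the a.e.\ energy identity along it, and convergence of trajectories to a minimizer; each is classical but none is trivial, and your sketch invokes rather than proves them. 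Your two flagged concerns do admit clean fixes worth recording: (i) confinement follows from nonexpansiveness of the flow --- every $z\in Z$ is an equilibrium, so $t\mapsto\|x(t)-z\|$ is nonincreasing by monotonicity of $\partial f$; taking $z=\operatorname{proj}_Z(x(0))$ keeps the whole trajectory inside $\mathcal{B}(\operatorname{proj}_Z(x(0)),\operatorname{dist}(x(0),Z))$, provided you cover the slightly enlarged compact set $Z\cap\{y\in\mathbb{R}^n\mid\operatorname{dist}(y,\mathcal{K})\leq\bar\delta\}$ rather than $Z\cap\operatorname{cl}(\mathcal{K})$, since $\operatorname{proj}_Z(x)$ need not lie in $\operatorname{cl}(\mathcal{K})$; (ii) passing to the worst exponent $\theta=\min_i\theta_i$ over the finite cover multiplies each local inequality by $f^{\theta_i-\theta}$, so you must also intersect with a sublevel set such as $\{f\leq1\}$ --- harmless, because points of $\mathcal{K}$ with $f(x)\geq1$ satisfy the claimed bound trivially, exactly as in your far-region argument. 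With those repairs the argument is correct, but at that point one has essentially rewritten the proof in \citet{bolte07lojo}, which is why the paper simply cites it.
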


Strong convexity is a standard requirement to prove linear convergence rates on smooth convex objectives but, regrettably, this considerably restricts the set of candidate functions. For our Blended Matching Pursuit algorithm, we will only require sharpness to establish linear convergence rates, thus including a larger class of functions.

\subsection{Matching Pursuit algorithms}
\label{sec:mp}

For $y\in\mathcal{H}$ and $f:x\in\mathcal{H}\mapsto\|y-x\|^2/2$, Problem~\eqref{pb} falls in the area of sparse recovery and is often solved via the Matching Pursuit algorithm \citep{mallat93mp}. The algorithm recovers a sparse representation of the signal $y$ from the dictionary $\mathcal{D}$ by sequentially \emph{pursuing} the best \emph{matching} atom. At each iteration, it searches for the atom $v_t\in\mathcal{D}$ most correlated with the residual $y-x_t$, i.e., $v_t\coloneqq\arg\max_{v\in\mathcal{D}}|\langle y-x_t,v\rangle|$, and adds it to the linear decomposition of the current iterate $x_t$ to form the new iterate $x_{t+1}$, keeping track of the \emph{active set} $\mathcal{S}_{t+1}=\mathcal{S}_t\cup\{v_t\}$. However, this does not prevent the algorithm from selecting atoms that have already been added in earlier iterations or that are redundant, hence affecting sparsity. The Orthogonal Matching Pursuit variant \citep{pati93,mallat94omp} overcomes this by computing the new iterate as the projection of the signal $y$ onto $\mathcal{S}_t\cup\{v_t\}$; see \citet{chen89} and \citet{tropp04} for analyses and \citet{zhang09} for an extension to the stochastic case. Thus, $y-x_{t+1}$ becomes orthogonal to the active set.\\

In order to solve Problem~\eqref{pb} for any smooth convex objective, \citet{locatello17mpfw} proposed the Generalized Matching Pursuit (GMP) and Generalized Orthogonal Matching Pursuit (GOMP) algorithms (Algorithm~\ref{mp}); slightly abusing notation we will refer to the latter simply as Orthogonal Matching Pursuit (OMP). The atom selection subroutine is implemented with a Frank-Wolfe linear minimization oracle $\argmin_{v\in\mathcal{D}'}\langle\nabla f(x_t),v\rangle$ (Line~\ref{fwquery}). The solution $v_t\in\mathcal{D}'$ 
to this oracle is guaranteed to be a descent direction as it satisfies $\langle\nabla f(x_t),v_t\rangle\leq0$ by symmetry of $\mathcal{D}'$, and $\langle\nabla f(x_t),v_t\rangle=0$ if and only if $x_t\in\argmin_\mathcal{H}f$. Notice that for $y\in\mathcal{H}$ and $f:x\in\mathcal{H}\mapsto\|y-x\|^2/2$, the GMP and OMP variants of Algorithm~\ref{mp} recover the original Matching Pursuit and Orthogonal Matching Pursuit algorithms respectively. In particular, up to a sign which does not affect the sequence of iterates, $\arg\max_{v\in\mathcal{D}}|\langle y-x_t,v\rangle|\Leftrightarrow\argmin_{v\in\mathcal{D}'}\langle\nabla f(x_t),v\rangle$. In practice, the main difference in the case of general smooth convex functions is that the OMP variant (Line~\ref{omp_variant}) is much more expensive, as a closed-form solution to this projection step is not available anymore. Hence, Line~\ref{omp_variant} is typically a sequence of projected (onto $\operatorname{span}(\mathcal{S}_{t+1})$) gradient steps and OMP is significantly slower than GMP to converge.

\begin{algorithm}[h]
\caption{Generalized/Orthogonal Matching Pursuit (GMP/OMP)}
\label{mp}
\textbf{Input:} Start atom $x_0\in\mathcal{D}$, number of iterations $T\in\mathbb{N}^*$.\\
\textbf{Output:} Iterates $x_1,\ldots,x_T\in\operatorname{span}(\mathcal{D})$.
\begin{algorithmic}[1]
\STATE$\mathcal{S}_0\leftarrow\{x_0\}$
\FOR{$t=0$ \textbf{to} $T-1$}
\STATE$v_t\leftarrow\argmin\limits_{v\in\mathcal{D}'}\langle\nabla f(x_t),v\rangle$
\label{fwquery}
\STATE$\mathcal{S}_{t+1}\leftarrow\mathcal{S}_t\cup\{v_t\}$
\STATE\emph{GMP variant}: $x_{t+1}\leftarrow\argmin\limits_{x_t+\mathbb{R}v_t}f$\label{gmp_variant}\\
\STATE\emph{OMP variant}: $x_{t+1}\leftarrow\argmin\limits_{\operatorname{span}(\mathcal{S}_{t+1})}f$\label{omp_variant}\\
\ENDFOR
\end{algorithmic}
\end{algorithm}

\subsection{Weak-separation oracle}
\label{sec:lpsep}

We present in Oracle~\ref{lpsep} the weak-separation oracle, a \emph{modified} version of the one first introduced in \citet{pok17lazy} and used in, e.g., \citet{lan2017conditional} and \citet{pok18bcg}. Note that the modification asks for an unconstrained improvement, whereas the original weak-separation oracle required an improvement relative to a reference point. As such, our variant here is even simpler than the original weak-separation oracle. The oracle is called in Line~\ref{call} by the Blended Matching Pursuit algorithm. 

\begin{oracle}[h]
\caption{Weak-separation $\text{LPsep}_{\mathcal{D}}(c,\phi,\kappa)$}
\label{lpsep}
\textbf{Input:} Linear objective $c\in\mathcal{H}$, objective value $\phi\leq0$, accuracy $\kappa\geq1$.\\
\textbf{Output:} Either atom $v\in\mathcal{D}$ such that
$\langle c,v\rangle\leq\phi/\kappa$ (positive call), or \textbf{false} ensuring $\langle c,z\rangle\geq\phi$ for all $z\in\operatorname{conv}(\mathcal{D})$ (negative call).
\end{oracle}

The weak-separation oracle determines whether there exists an atom $v\in\mathcal{D}$ such that $\langle c,v\rangle\leq\phi/\kappa$, and thereby relaxes the Frank-Wolfe linear minimization oracle. If not, then this implies that $\operatorname{conv}(\mathcal{D})$ can be \emph{separated} from the ambient space by $c$ and $\phi$ with the linear inequality $\langle c,z\rangle\geq\phi$ for all $z\in\operatorname{conv}(\mathcal{D})$. In practice, the oracle can be efficiently implemented using \emph{caching}, i.e., first testing atoms that were already returned during previous calls as they may satisfy the condition here again. In this case, caching also preserves sparsity. If no active atom satisfies the condition, the oracle can be solved, e.g., by means of a call to a linear optimization oracle; see \citet{pok17lazy} for an in-depth discussion. Lastly, we would like to briefly note that the parameter $\kappa$ can be used to further promote positive calls over negative calls, by weakening the improvement requirement and therefore speeding up the oracle. Indeed, only negative calls need a full scan of the dictionary. 

\section{The Blended Matching Pursuit algorithm}
\label{sec:bgmp}

We now present our Blended Matching Pursuit algorithm (BMP) in Algorithm~\ref{bgmp}.
Note that although we blend steps, we maintain the explicit decomposition of the iterates $x_t=\sum_{j=1}^{n_t}\lambda_{t,i_j}a_{i_j}$ as linear combinations of the atoms.

\begin{algorithm}[h]
\caption{Blended Matching Pursuit (BMP)}
\label{bgmp}
\textbf{Input:} Start atom $x_0\in\mathcal{D}$, accuracy parameters $\kappa\geq1$ and $\eta>0$, scaling parameter $\tau>1$, number of iterations $T\in\mathbb{N}^*$.\\
\textbf{Output:} Iterates $x_1,\ldots,x_T\in\operatorname{span}(\mathcal{D})$.
\begin{algorithmic}[1]
\STATE$\mathcal{S}_0\leftarrow\{x_0\}$
\STATE$\phi_0\leftarrow\min\limits_{v\in\mathcal{D}'}\langle\nabla f(x_0),v\rangle/\tau$
\label{phi0}
\FOR{$t=0$ \textbf{to} $T-1$}\label{for}
\STATE$v_t^{\text{FW-}\mathcal{S}}
\leftarrow\argmin\limits_{v\in\mathcal{S}_t'}\langle\nabla f(x_t),v\rangle$\label{bmp_atom}
\IF{$\left\langle\nabla f(x_t),v_t^{\text{FW-}\mathcal{S}}\right\rangle\leq\phi_t/\eta$}\label{criterion}
\STATE$\widetilde{\nabla}f(x_t)\leftarrow
\operatorname{proj}_{\operatorname{span}(\mathcal{S}_t)}(\nabla f(x_t))$
\label{proj}
    \STATE$x_{t+1}\leftarrow
    \argmin\limits_{x_t+\mathbb{R}\widetilde{\nabla}f(x_t)}f$
    \hfill\COMMENT{constrained step}\\
    \label{constrained_step}
    \STATE$\mathcal{S}_{t+1}\leftarrow\mathcal{S}_t$
    \STATE$\phi_{t+1}\leftarrow\phi_t$
\ELSE
    \STATE$v_t\leftarrow\text{LPsep}_{\mathcal{D}'}
    (\nabla f(x_t),\phi_t,\kappa)$
    \label{call}
    \IF{$v_t=\textbf{false}$}
    \STATE$x_{t+1}\leftarrow x_t$\hfill\COMMENT{dual step}\\
    \label{stationary_step}
    \STATE$\mathcal{S}_{t+1}\leftarrow\mathcal{S}_t$
    \STATE$\phi_{t+1}\leftarrow\phi_t/\tau$\label{tau}
    \ELSE
    \STATE$x_{t+1}\leftarrow
    \argmin\limits_{x_t+\mathbb{R}v_t}f$
    \hfill\COMMENT{full step}\\
    \label{full_step}
    \STATE$\mathcal{S}_{t+1}\leftarrow\mathcal{S}_t\cup\{v_t\}$
    \STATE$\phi_{t+1}\leftarrow\phi_t$
    \ENDIF
  \ENDIF
  \STATE$\text{\emph{Optional:} Correct }\mathcal{S}_{t+1}$\label{correct}
\ENDFOR
\end{algorithmic}
\end{algorithm}

\begin{remark}[(Algorithm design)]
BMP actually does not require the atoms to have exactly the same norm and only needs the dictionary to be bounded, whether it be for ensuring the convergence rates or for computations; one could further take advantage of this to add weights to certain atoms. Line~\ref{proj} is simply taking the component of $\nabla f(x_t)$ parallel to $\operatorname{span}(\mathcal{S}_t)$, which can be achieved by basic linear algebra and costs $\mathcal{O}(n\operatorname{card}(\mathcal{S}_t)^2)$ when $\mathcal{H}=\mathbb{R}^n$. The line searches Lines~\ref{constrained_step} and~\ref{full_step} can be replaced with explicit step sizes using the smoothness of $f$ (see Fact~\ref{fact} in the Appendix). The purpose of (the optional) Line~\ref{correct} is to reoptimize the active set $\mathcal{S}_{t+1}$, e.g., by reducing it to a subset that forms a basis for its linear span. One could also obtain further sparsity by removing atoms whose coefficient in the decomposition of the iterate is smaller than some threshold $\delta>0$.
\end{remark}

\paragraph{Blending.} BMP aims at unifying the speed of GMP and the sparsity of OMP. As seen in Section~\ref{sec:mp}, an OMP iteration is typically a sequence of projected gradient (PG) steps. The idea is that the sequence of PG steps constituting an OMP iteration is actually overkill: there is a sweet spot where further optimizing over the active space $\operatorname{span}(\mathcal{S}_t)$ is less effective than adding a new atom and taking a GMP step into a (possibly) new space. However, PG steps have the benefit of preserving sparsity, since no new atom is added. Furthermore, GMP steps require an expensive scan of the dictionary to output the descent direction $v_t^\text{FW}\leftarrow\argmin_{v\in\mathcal{D}'}\langle\nabla f(x_t),v\rangle$.  To remedy this, BMP blends \emph{constrained steps} (PG steps, Line~\ref{constrained_step}) with \emph{full steps} (lazified GMP steps, Line~\ref{full_step}) by promoting constrained steps as long as the progress in function value is \emph{comparable} to that of a GMP step, else by taking a full step in an approximate direction $v_t$ (with cheap computation via Oracle~\ref{lpsep}) such that the progress is \emph{comparable} to that of a GMP step. Therefore, to monitor this blending of steps, we wish to compare $\langle\nabla f(x_t),v_t^{\text{FW-}\mathcal{S}}\rangle$ and $\langle\nabla f(x_t),v_t\rangle$ to $\langle\nabla f(x_t),v_t^\text{FW}\rangle$, which quantities measure the progress in function value offered by a constrained step, a full step, and a GMP step respectively.

\paragraph{Dual gap estimates.} The aforementioned comparisons however cannot be made directly as the quantity $\langle\nabla f(x_t),v_t^\text{FW}\rangle$ is (deliberately) not computed; computing it requires an expensive complete scan of the dictionary. Instead, we use an estimation of this quantity, by introducing the \emph{dual gap estimate} $|\phi_t|$. This designation comes from the fact that $-\langle\nabla f(x_t),v_t^\text{FW}\rangle$ is our equivalent of the \emph{duality gap} from the constrained setting (see, e.g., \citet{jaggi13fw}), and this will guide how we build our estimation. Indeed, since $\mathcal{D}'$ is symmetric and assuming $0\in\textrm{int}(\textrm{conv}(\mathcal{D}'))$, there exists (an unknown) $\rho>0$ such that $\{x_0,\ldots,x_T\}\cup\argmin_\mathcal{H}f\subset\rho\,\textrm{conv}(\mathcal{D}')$. Then for all $x^*\in\argmin_\mathcal{H}f$,
\begin{align}
 \epsilon_t\coloneqq f(x_t)-f(x^*)
 &\leq\langle\nabla f(x_t),x_t-x^*\rangle\nonumber\\
 &\leq\max_{u,v\in\rho\,\textrm{conv}(\mathcal{D}')}
\langle\nabla f(x_t),u-v\rangle\nonumber\\
&=-2\rho\langle\nabla f(x_t),v_t^\text{FW}\rangle,
\label{scaling}
\end{align} 
which is our desired inequality. We set $\phi_0\leftarrow\langle\nabla f(x_0),v_0^\text{FW}\rangle/\tau$ (Line~\ref{phi0}) so $\epsilon_0\leq2\tau\rho|\phi_0|$ by~\eqref{scaling}. The criterion in Line~\ref{criterion} compares $\langle\nabla f(x_t),v_t^{\text{FW-}\mathcal{S}}\rangle$ to $\phi_t$.  If this quantity is below the threshold $\phi_t$, then a constrained step is not taken and the weak-separation oracle (Line~\ref{call}, Oracle~\ref{lpsep}) is called to search for an atom $v_t$ satisfying $\langle\nabla f(x_t),v_t\rangle \leq \phi_t$. If the oracle cannot find such an atom, then a full step is not taken and it returns a \emph{negative call} with the certificate $\langle\nabla f(x_t),v_t^\text{FW}\rangle>\phi_t$. In this case, BMP has detected an improved dual gap estimate and takes a \emph{dual step} (Line~\ref{stationary_step}): by~\eqref{scaling}, this implies that $\epsilon_t\leq2\rho|\phi_t|$ so with $\phi_{t+1}\leftarrow\phi_t/\tau$ and $x_{t+1}\leftarrow x_t$, we recover $\epsilon_{t+1}\leq2\tau\rho|\phi_{t+1}|$. Furthermore, observe that this update is a geometric rescaling which ensures that BMP requires only $N_\text{dual}=\mathcal{O}(\ln 1/\epsilon )$ dual steps (see proofs). Thus, the total number of negative calls, i.e., the number of iterations requiring a complete scan of the dictionary, is only $\mathcal{O}(\ln 1/\epsilon)$. Therefore, for this and for the blending of steps, the dual gap estimates $|\phi_t|$ are the key to the speed-up realized by BMP.

\paragraph{Parameters.} BMP involves three (hyper-)parameters $\eta>0$, $\kappa\geq1$, and $\tau>1$ to be set before running the algorithm. The parameter $\eta$ needs to be tuned carefully, as its value affects the criterion in Line~\ref{criterion} to promote either speed of convergence (e.g., $\eta\sim0.1$, promoting full steps) or sparsity of the iterates (e.g., $\eta\sim1000$, promoting constrained steps). In our experiments (see Section~\ref{sec:experiments} and the Appendix), we found that setting $\eta\sim5$ leads to close to both maximal speed of convergence and sparsity of the iterates, with the default choices $\kappa=\tau=2$. In this setting, BMP converges (much) faster than GMP and has iterates with sparsity very comparable to that of OMP, and therefore it is possible to enjoy both properties of speed and sparsity simultaneously. Note that the value of $\kappa$ also impacts the range of values of $\eta$ to which BMP is sensitive, since the criterion (Line~\ref{criterion}) tests $\min_{v\in\mathcal{S}_t'}\langle\nabla f(x_t),v\rangle
\leq\phi_t/\eta$ while the weak-separation oracle asks for $v\in\mathcal{D}'$ such that $\langle\nabla f(x_t),v\rangle\leq\phi_t/\kappa$. As always, in specific experiments, parameter tuning might further improve performance.

\subsection{Convergence analyses}
\label{cv}

We start with the simpler case of smooth convex functions of order $\ell>1$ (Theorem~\ref{th:convex}). Our main result is Theorem~\ref{th:bgmp}, which subsumes the case of strongly convex functions. To establish the convergence rates of GMP and OMP, \citet{locatello17mpfw} assume knowledge of an upper bound on $\sup\{\|x^*\|_{\mathcal{D}'},\|x_0\|_{\mathcal{D}'},\ldots,\|x_T\|_{\mathcal{D}'}\}$ where $\|\cdot\|_{\mathcal{D}'}:x\in\mathcal{H}\mapsto\inf\{\rho>0\mid x\in\rho\operatorname{conv}(\mathcal{D}')\}$ is the \emph{atomic norm}. In \citet{locatello18mpcd}, this is resolved by working with the atomic norm $\|\cdot\|_{\mathcal{D}'}$ instead of the Hilbert space induced norm $\|\cdot\|$ to, e.g., define smoothness and strong convexity of $f$ and derive the proofs, but $\|\cdot\|_{\mathcal{D}'}$ itself can be difficult to derive in many applications. In contrast, we need neither the finiteness assumption nor to change the norm, however we assume $f$ to be coercive to ensure feasibility of Problem~\eqref{pb}, a reasonably mild assumption.

\begin{theorem}[(Smooth convex case)]
\label{th:convex}
Let $\mathcal{D}\subset\mathcal{H}$ be a dictionary such that $0\in\operatorname{int}(\operatorname{conv}(\mathcal{D}'))$ and let $f:\mathcal{H}\rightarrow\mathbb{R}$ be smooth of order $\ell>1$, convex, and coercive. Then the Blended Matching Pursuit algorithm (Algorithm~\ref{bgmp}) ensures that $f(x_t)-\min_\mathcal{H}f\leq\epsilon$ for all $t\geq T$ where  
\begin{align*}
 T=\mathcal{O}\left(\left(\frac{L}{\epsilon}\right)^{1/(\ell-1)}\right).
\end{align*}
\end{theorem}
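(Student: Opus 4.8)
**The plan is to bound the total number of iterations $T$ by separately counting the three types of steps—constrained, full, and dual—and showing that each category needs only $\mathcal{O}((L/\epsilon)^{1/(\ell-1)})$ iterations (the dual steps in fact only $\mathcal{O}(\ln 1/\epsilon)$).**

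First I would establish the progress guarantee for a single constrained step and a single full step. For both, the iterate moves along a descent direction $d_t$ (either $\widetilde{\nabla}f(x_t)$ or $v_t$) with an exact line search, so by smoothness of order $\ell$ we get
\begin{align*}
 f(x_t)-f(x_{t+1})\geq\min_{\gamma\in\mathbb{R}}\left(-\gamma\langle\nabla f(x_t),d_t\rangle-\frac{L|\gamma|^\ell}{\ell}\|d_t\|^\ell\right)\geq c\,\frac{|\langle\nabla f(x_t),d_t\rangle|^{\ell/(\ell-1)}}{(L\|d_t\|^\ell)^{1/(\ell-1)}}
\end{align*}
for an explicit constant $c>0$, by optimizing the scalar $\gamma$. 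The point of the criterion in Line~\ref{criterion} and of the oracle guarantee in Oracle~\ref{lpsep} is that in both non-dual cases $|\langle\nabla f(x_t),d_t\rangle|\geq|\phi_t|/\max\{\eta,\kappa\}$, so each such step gives progress of order $|\phi_t|^{\ell/(\ell-1)}$ up to a constant depending on $\eta,\kappa,L$ and the (bounded) dictionary diameter, which controls $\|d_t\|$. Here I would use boundedness of $\mathcal{D}$ (hence of $\mathcal{S}_t'$) to uniformly bound $\|d_t\|$, noting coercivity keeps all iterates in a bounded sublevel set so $\|\widetilde{\nabla}f(x_t)\|$ stays controlled as well.

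Next I would tie $|\phi_t|$ to the optimality gap. By the dual-gap scaling inequality~\eqref{scaling}, whenever a non-dual step is taken we have $\langle\nabla f(x_t),v_t^{\text{FW}}\rangle\leq\phi_t$, which combined with $\epsilon_t\leq-2\rho\langle\nabla f(x_t),v_t^{\text{FW}}\rangle$ yields $|\phi_t|\geq\epsilon_t/(2\rho)$. Substituting into the per-step progress bound gives a recursion of the form $\epsilon_t-\epsilon_{t+1}\geq c'\,\epsilon_t^{\ell/(\ell-1)}$ for the constrained and full steps, which is the standard differential-inequality pattern yielding the $\mathcal{O}((L/\epsilon)^{1/(\ell-1)})$ sublinear rate after summation (a telescoping/induction argument on $1/\epsilon_t^{1/(\ell-1)}$). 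For the dual steps, the geometric rescaling $\phi_{t+1}\leftarrow\phi_t/\tau$ together with the invariant $\epsilon_t\leq2\tau\rho|\phi_t|$—maintained at every step and verified inductively—forces $|\phi_t|\to0$ geometrically, so only $\mathcal{O}(\ln 1/\epsilon)$ dual steps can occur before $|\phi_t|$ (and hence $\epsilon_t$) drops below $\epsilon$.

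The main obstacle I expect is handling the unknown scaling constant $\rho$ cleanly, since $\rho$ is not available to the algorithm and appears in both the progress lower bound (through the invariant coupling $\epsilon_t$ to $|\phi_t|$) and in bounding the number of dual steps. The key is to verify that the invariant $\epsilon_t\leq2\tau\rho|\phi_t|$ is preserved across all three step types: it holds at initialization by the choice of $\phi_0$, is untouched when $\phi_{t+1}=\phi_t$ and $\epsilon$ decreases, and is exactly restored after a dual step by the interplay between the negative-call certificate and the division by $\tau$. Once this invariant is secured, $\rho$ enters only as a constant absorbed into the $\mathcal{O}(\cdot)$, and the three counts combine to give the claimed bound on $T$; I would finish by checking that the constant hidden in $\mathcal{O}$ depends only on $\ell,\eta,\kappa,\tau$, the dictionary diameter, and $\rho$, but not on $t$ or $\epsilon$.
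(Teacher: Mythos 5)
Your proposal reaches the same bound but by a genuinely different counting scheme than the paper. The paper groups iterations into epochs delimited by dual steps: within an epoch $\phi_t$ is frozen, each full or constrained step makes progress at least of order $|\phi_t|^{\overline{\ell}}$ with $\overline{\ell}=\ell/(\ell-1)$, the total progress available in the epoch is at most $\epsilon_t\leq\frac{2\rho\tau}{r}|\phi_t|$, so each epoch has $\mathcal{O}(|\phi_t|^{1-\overline{\ell}})$ steps, and summing the resulting geometric series over the $\mathcal{O}(\log_\tau(1/\epsilon))$ epochs gives the claimed $T$. You instead use the invariant $\epsilon_t\leq\frac{2\tau\rho}{r}|\phi_t|$ at \emph{every} iteration (the paper notes parenthetically after~\eqref{0:epoch_up} that it holds for all $t$) to turn the per-step progress into the recursion $\epsilon_{t+1}\leq\epsilon_t-c'\epsilon_t^{\overline{\ell}}$ on non-dual steps, then apply the standard differential-inequality lemma, counting dual steps separately via the geometric decay of $|\phi_t|$. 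Both are valid and give the same rate; your per-step recursion is arguably more streamlined, while the paper's epoch bookkeeping is the structure that carries over directly to the sharp case of Theorem~\ref{th:bgmp}, where the per-epoch count scales as $|\phi_t|^{1/(1-\theta)-\overline{\ell}}$ and the geometric sum collapses to the linear rate when $\ell\theta=1$.

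Two local slips need repair, though neither sinks the argument. First, your mid-proof justification of $|\phi_t|\geq\epsilon_t/(2\rho)$ at non-dual steps is wrong as stated: a positive oracle call yields $\langle\nabla f(x_t),v_t\rangle\leq\phi_t/\kappa$, hence $\langle\nabla f(x_t),v_t^{\text{FW}}\rangle\leq\phi_t/\kappa$, which (since $\phi_t\leq0$ and $\kappa\geq1$) does \emph{not} imply $\langle\nabla f(x_t),v_t^{\text{FW}}\rangle\leq\phi_t$; and even granting that inequality, it lower-bounds $|\langle\nabla f(x_t),v_t^{\text{FW}}\rangle|$ by $|\phi_t|$, so combining it with $\epsilon_t\leq2\rho|\langle\nabla f(x_t),v_t^{\text{FW}}\rangle|$ chains the inequalities the wrong way and proves nothing. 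The statement you need is exactly the inductive invariant you establish in your final paragraph (initialization via Line~\ref{phi0}, restoration after a dual step by the negative-call certificate together with Line~\ref{tau}, preservation otherwise since $\epsilon$ decreases and $\phi$ is unchanged); delete the faulty sentence and rest the recursion on the invariant alone. Second, for the constrained step your claim $|\langle\nabla f(x_t),d_t\rangle|\geq|\phi_t|/\eta$ with $d_t=\widetilde{\nabla}f(x_t)$ is false in general, because $\langle\nabla f(x_t),\widetilde{\nabla}f(x_t)\rangle=\|\widetilde{\nabla}f(x_t)\|^2$; moreover, pairing a coercivity-based upper bound on $\|d_t\|$ with any linear-in-$|\phi_t|$ inner-product bound would cost you a square in $|\phi_t|$ and degrade the rate to $\mathcal{O}(\epsilon^{-(\ell+1)/(\ell-1)})$. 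The fix is already latent in your progress formula: the ratio $|\langle\nabla f(x_t),d_t\rangle|^{\overline{\ell}}/(L^{\overline{\ell}-1}\|d_t\|^{\overline{\ell}})$ is invariant under rescaling $d_t$, so normalize the direction; the relevant quantity becomes $\|\widetilde{\nabla}f(x_t)\|$, which you bound below by Cauchy--Schwarz against $v_t^{\text{FW-}\mathcal{S}}\in\operatorname{span}(\mathcal{S}_t)$ with $\|v_t^{\text{FW-}\mathcal{S}}\|\leq D_{\mathcal{D}'}/2$, giving $\|\widetilde{\nabla}f(x_t)\|\geq2|\phi_t|/(\eta D_{\mathcal{D}'})$ — exactly the paper's chain, with no gradient bound over the sublevel set needed (coercivity is used only, via Fact~\ref{tn}, to bound the iterates and hence define $\rho$). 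With these two repairs your argument is correct and yields $T=\mathcal{O}\left((L/\epsilon)^{1/(\ell-1)}\right)$.
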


\begin{proof}
Let $\epsilon>0$ and $T=N_\text{dual}+N_\text{full}+N_\text{constrained}\in\mathbb{N}\cup\{+\infty\}$ where $N_\text{dual}$, $N_\text{full}$, and $N_\text{constrained}$ are the number of dual steps (Line~\ref{stationary_step}), full steps (Line~\ref{full_step}), and constrained steps (Line~\ref{constrained_step}) taken in total respectively. The objective $f$ is continuous and coercive so $\argmin_\mathcal{H}f\neq\varnothing$. Let $\epsilon_t\coloneqq f(x_t)-\min_\mathcal{H}f$ for $t\in\mathbb{N}$. Similarly to \citet{pok17lazy}, we introduce \emph{epoch starts} at iteration $t=0$ or any iteration immediately following a dual step. Our goal is to bound the number of epochs and the number of iterations within each epoch. Notice that $0\leq\epsilon_{t+1}\leq\epsilon_t$ and $\phi_t\leq\phi_{t+1}\leq0$ for $t\in\mathbb{N}$.

Let $x^*\in\argmin_\mathcal{H}f$. The function $f$ is coercive and $f(x_{t+1})\leq f(x_t)$ for $t\in\mathbb{N}$, so by Fact~\ref{tn} the sequence of iterates is bounded. Define $\rho\coloneqq\sup_{t\in\mathbb{N}}\|x_t-x^*\|<+\infty$. Note that $\rho$ is independent of $T$. Let $t\in\mathbb{N}$ be an iteration of the algorithm, and
$v_t^\text{FW}\in\argmin_{v\in\mathcal{D}'}\langle\nabla f(x_t),v\rangle
=\argmin_{z\in\operatorname{conv}(\mathcal{D}')}\langle\nabla f(x_t),z\rangle$. We can assume that $f(x_t)>f(x^*)$ otherwise the iterates have already converged. By convexity, $\langle\nabla f(x_t),x^*-x_t\rangle<0$.
Since $0\in\operatorname{int}
(\operatorname{conv}(\mathcal{D}'))$,
there exists $r>0$ such that
$\mathcal{B}(0,r)
\subseteq\operatorname{conv}(\mathcal{D}')$.
Thus, $\frac{r(x^*-x_t)}{2\|x^*-x_t\|}
\in\operatorname{conv}(\mathcal{D}')$ so
\begin{align*}
\min\limits_{z\in\operatorname{conv}(\mathcal{D}')}\langle\nabla f(x_t),z\rangle
&=\left\langle\nabla f(x_t),v_t^\text{FW}\right\rangle
\leq\left\langle\nabla f(x_t),
\frac{r(x^*-x_t)}{2\|x^*-x_t\|}\right\rangle<0
\end{align*}
i.e.,
\begin{align}
 \langle\nabla f(x_t),x_t-x^*\rangle
 &\leq\frac{2\|x_t-x^*\|}{r}\left\langle-\nabla f(x_t),v_t^\text{FW}\right\rangle\nonumber\\
 &\leq\frac{2\rho}{r}\left\langle-\nabla f(x_t),v_t^\text{FW}\right\rangle.
 \label{0:r1}
\end{align}
By convexity,
\begin{align*}
 f(x_t)-f(x^*)&\leq\langle\nabla f(x_t),x_t-x^*\rangle
\end{align*}
so with \eqref{0:r1},
\begin{align}
 \epsilon_t\leq\frac{2\rho}
 {r}\left\langle-\nabla f(x_t),v_t^\text{FW}\right\rangle.
 \label{0:up_all}
\end{align}
Let $t$ be a dual step (Line~\ref{stationary_step}).
Then the weak-separation oracle call (Line~\ref{call}) yields
$\left\langle\nabla f(x_t),v_t^\text{FW}\right\rangle\geq\phi_t$. By \eqref{0:up_all}
and Line~\ref{tau},
\begin{align}
 \epsilon_t
 &\leq\frac{2\rho}
 {r}|\phi_t|
 \label{0:up_stationary}\\
 &=\frac{2\rho}
 {r}\frac{|\phi_0|}{\tau^{n_\text{dual}}}\label{0:t_stat}
\end{align}
where $n_\text{dual}$ is the number of dual steps
taken before $t$. Therefore, by \eqref{0:t_stat} and since $\tau>1$,
\begin{align}
N_\text{dual}&\leq\ceil[\bigg]{
\log_\tau\left(\frac{2\rho|\phi_0|}{r\epsilon}\right)
 }.
 \label{0:stationary}
\end{align}

If a full step is taken (Line~\ref{full_step}), then the weak-separation oracle (Line~\ref{call}) returns $v_t\in\mathcal{D}'$ such that $\langle\nabla f(x_t),v_t\rangle\leq\phi_t/\kappa$. By smoothness and using Fact~\ref{fact} with $\overline{\ell}\coloneqq\ell/(\ell-1)>1$,
\begin{align*}
 f(x_{t+1})
 &\leq\min_{\gamma\in\mathbb{R}_+}f(x_t+\gamma v_t)\\
 &\leq\min_{\gamma\in\mathbb{R}_+}f(x_t)+\gamma\langle\nabla f(x_t),v_t\rangle+\frac{L}{\ell}\gamma^\ell\|v_t\|^\ell\\
 &=f(x_t)-\frac{\langle-\nabla f(x_t),v_t\rangle^{\overline{\ell}}}{\overline{\ell}L^{\overline{\ell}-1}\|v_t\|^{\overline{\ell}}}\\
 &\leq f(x_t)-\frac{|\phi_t/\kappa|^{\overline{\ell}}}{\overline{\ell}L^{\overline{\ell}-1}(D_{\mathcal{D}'}/2)^{\overline{\ell}}}
\end{align*}
where we used $\|v_t\|\leq D_{\mathcal{D}'}/2$ (by symmetry). Therefore, the primal progress is at least
\begin{align}
 f(x_t)-f(x_{t+1})\geq\frac{2^{\overline{\ell}}|\phi_t|^{\overline{\ell}}}{\overline{\ell}\kappa^{\overline{\ell}}L^{\overline{\ell}-1}D_{\mathcal{D}'}^{\overline{\ell}}}.\label{0:full}
\end{align}

Lastly, if a constrained step is taken (Line~\ref{constrained_step}), then by smoothness and using Fact~\ref{fact} with $-\widetilde{\nabla}f(x_t)$,
\begin{align*}
 f(x_{t+1})
 &\leq\min_{\gamma\in\mathbb{R}_+}f\big(x_t-\gamma\widetilde{\nabla}f(x_t)\big)\\
 &\leq\min_{\gamma\in\mathbb{R}_+}f(x_t)-\gamma\big\langle\nabla f(x_t),\widetilde{\nabla}f(x_t)\big\rangle+\frac{L}{\ell}\gamma^\ell\big\|\widetilde{\nabla}f(x_t)\big\|^\ell\\
 &=f(x_t)-\frac{\big\langle\nabla f(x_t),\widetilde{\nabla}f(x_t)\big\rangle^{\overline{\ell}}}{\overline{\ell}L^{\overline{\ell}-1}\big\|\widetilde{\nabla}f(x_t)\big\|^{\overline{\ell}}}\\
 &=f(x_t)-\frac{\big\|\widetilde{\nabla}f(x_t)\|^{\overline{\ell}}}{\overline{\ell}L^{\overline{\ell}-1}}\\
 &\leq f(x_t)-\frac{\big|\big\langle\widetilde{\nabla}f(x_t),v_t^{\text{FW-}\mathcal{S}}\big\rangle\big|^{\overline{\ell}}}{\overline{\ell}L^{\overline{\ell}-1}\big\|v_t^{\text{FW-}\mathcal{S}}\big\|^{\overline{\ell}}}\\
 &\leq f(x_t)-\frac{\big|\big\langle\nabla f(x_t),v_t^{\text{FW-}\mathcal{S}}\big\rangle\big|^{\overline{\ell}}}{\overline{\ell}L^{\overline{\ell}-1}\big\|v_t^{\text{FW-}\mathcal{S}}\big\|^{\overline{\ell}}}\\
 &\leq f(x_t)-\frac{|\phi_t/\eta|^{\overline{\ell}}}{\overline{\ell}L^{\overline{\ell}-1}(D_{\mathcal{D}'}/2)^{\overline{\ell}}}
\end{align*}
where the last three lines respectively come from the Cauchy-Schwarz inequality, $v_t^{\text{FW-}\mathcal{S}}\in\operatorname{span}(\mathcal{S}_t)$, $\big\langle\nabla f(x_t),v_t^{\text{FW-}\mathcal{S}}\big\rangle\leq\phi_t/\eta$ (Line~\ref{criterion}), and $\big\|v_t^{\text{FW-}\mathcal{S}}\big\|\leq D_{\mathcal{D}'}/2$ (by symmetry). Therefore, the primal progress is at least
\begin{align}
 f(x_t)-f(x_{t+1})\geq\frac{2^{\overline{\ell}}|\phi_t|^{\overline{\ell}}}{\overline{\ell}\eta^{\overline{\ell}}L^{\overline{\ell}-1}D_{\mathcal{D}'}^{\overline{\ell}}}\label{0:constrained}
\end{align}
whose lower bound only differs by a constant factor $(\kappa/\eta)^{\overline{\ell}}$ from that of a full step \eqref{0:full}.

Now, we have
\begin{align}
T&=N_\text{dual}+N_\text{full}+N_\text{constrained}\nonumber\\
&=N_\text{dual}
+\sum_{\substack{t=0\\t\text{ epoch start}}}^{T-1}\left(N_\text{full}^{(t)}+N_\text{constrained}^{(t)}\right)
\label{0:count}
\end{align}
where $N_\text{full}^{(t)}$ and $N_\text{constrained}^{(t)}$ are the
number of full steps and constrained steps taken during epoch $t$ respectively.
Let $t>0$ be an epoch start. Thus, $t-1$ is a dual step.
By \eqref{0:up_stationary}, since $x_t=x_{t-1}$
and $\phi_t=\phi_{t-1}/\tau$,
\begin{align}
 \epsilon_t\leq\frac{2\rho\tau}
 {r}|\phi_t|.\label{0:epoch_up}
\end{align}
This also holds for $t=0$ by \eqref{0:up_all} and Line~\ref{phi0}
(and actually for all $t\in\llbracket0,T\rrbracket$).
By \eqref{0:full} and \eqref{0:constrained},
since $\phi_s=\phi_t$ for all nondual steps $s$ in the epoch starting at $t$,
\begin{align}
\epsilon_t
&\geq\sum_{s\in\text{epoch}(t)}\big(f(x_s)-f(x_{s+1})\big)\nonumber\\
 &\geq\left(N_\text{full}^{(t)}+N_\text{constrained}^{(t)}\right)
 \frac{2^{\overline{\ell}}|\phi_t|^{\overline{\ell}}}{\overline{\ell}\max\{\kappa^{\overline{\ell}},\eta^{\overline{\ell}}\}L^{\overline{\ell}-1}D_{\mathcal{D}'}^{\overline{\ell}}}
 \label{0:epoch_down}
\end{align}
Combining \eqref{0:epoch_up} and \eqref{0:epoch_down},
\begin{align}
 N_\text{full}^{(t)}+N_\text{constrained}^{(t)}
 &\leq\frac{2\rho\tau}{r}\frac{\overline{\ell}\max\{\kappa^{\overline{\ell}},\eta^{\overline{\ell}}\}L^{\overline{\ell}-1}D_{\mathcal{D}'}^{\overline{\ell}}}{2^{\overline{\ell}}}|\phi_t|^{1-\overline{\ell}}.\label{0:full_cons}
\end{align}
Therefore, by \eqref{0:count}, \eqref{0:full_cons}, and $\overline{\ell}>1$,
\begin{align*}
T&\leq N_\text{dual}+\frac{2\rho\tau}{r}\frac{\overline{\ell}\max\{\kappa^{\overline{\ell}},\eta^{\overline{\ell}}\}L^{\overline{\ell}-1}D_{\mathcal{D}'}^{\overline{\ell}}}{2^{\overline{\ell}}}
\sum_{t=0}^{N_\text{dual}}\left(\frac{|\phi_0|}{\tau^t}
\right)^{1-\overline{\ell}}\\
&=N_\text{dual}+\frac{2\rho\tau}{r}\frac{\overline{\ell}\max\{\kappa^{\overline{\ell}},\eta^{\overline{\ell}}\}L^{\overline{\ell}-1}D_{\mathcal{D}'}^{\overline{\ell}}}{2^{\overline{\ell}}}|\phi_0|^{1-\overline{\ell}}
\frac{\tau^{(\overline{\ell}-1)(N_\text{dual}+1)}-1}
{\tau^{\overline{\ell}-1}-1}.
\end{align*}
By \eqref{0:stationary},
\begin{align*}
T&\leq\log_\tau\left(\frac{2\rho|\phi_0|}{r\epsilon}\right)+1
+\frac{2\rho\tau}{r}\frac{\overline{\ell}\max\{\kappa^{\overline{\ell}},\eta^{\overline{\ell}}\}L^{\overline{\ell}-1}D_{\mathcal{D}'}^{\overline{\ell}}}{2^{\overline{\ell}}}
\frac{|\phi_0|^{1-\overline{\ell}}}{\tau^{\overline{\ell}-1}-1}
\left(\tau^{(\overline{\ell}-1)
\left(\log_\tau\left(\frac{2\rho|\phi_0|}
{r\epsilon}\right)+2\right)}-1\right)\\
&=\log_\tau\left(\frac{2\rho|\phi_0|}{r\epsilon}\right)+1
+\frac{2\rho\tau}{r}\frac{\overline{\ell}\max\{\kappa^{\overline{\ell}},\eta^{\overline{\ell}}\}L^{\overline{\ell}-1}D_{\mathcal{D}'}^{\overline{\ell}}}{2^{\overline{\ell}}}
\frac{|\phi_0|^{1-\overline{\ell}}}{\tau^{\overline{\ell}-1}-1}
\left(\tau^{2(\overline{\ell}-1)}
\left(\frac{2\rho|\phi_0|}{r\epsilon}\right)^{\overline{\ell}-1}
-1\right).
\end{align*}
We conclude that the algorithm converges with
\begin{align*}
T=\mathcal{O}\left(\left(\frac{L}{\epsilon}\right)^{1/(\ell-1)}\right).
\end{align*}
\end{proof}

We now present our main result in its full generality. We provide the general convergence rates of BMP (Algorithm~\ref{bgmp}) in Theorem~\ref{th:bgmp}. Recall that sharpness is implied by strong convexity and
that it is a very mild assumption in finite dimensional spaces
as it is satisfied by all \emph{well-behaved} convex functions (Lemma~\ref{lem:bolte}).

\begin{theorem}[(Smooth convex sharp case)]
\label{th:bgmp}
Let $\mathcal{D}\subset\mathcal{H}$ be a dictionary such that $0\in\operatorname{int}(\operatorname{conv}(\mathcal{D}'))$ and let $f:\mathcal{H}\rightarrow\mathbb{R}$ be $L$-smooth of order $\ell>1$, convex, coercive, and $C$-sharp of order $\theta\in\left]0,1/\ell\right]$ on $\mathcal{K}$. Then the Blended Matching Pursuit algorithm (Algorithm~\ref{bgmp}) ensures that $f(x_t)-\min_\mathcal{H}f\leq\epsilon$ for all $t\geq T$ where
\begin{align*}
T=\begin{cases}
 \displaystyle\mathcal{O}\left(C^{1/(1-\theta)}L^{1/(\ell-1)}\ln\left(\frac{C|\phi_0|}{\epsilon^{1-\theta}}\right)\right)&\text{if }\ell\theta=1\\
\displaystyle\mathcal{O}\left(\left(\frac{C^\ell L}{\epsilon^{1-\ell\theta}}\right)^{1/(\ell-1)}\right)&\text{if }\ell\theta<1.
\end{cases}
\end{align*}
Moreover, $\operatorname{dist}(x_t,\argmin_\mathcal{H}f)\rightarrow0$ as $t\rightarrow+\infty$ at same rate.
\end{theorem}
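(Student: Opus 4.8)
The plan is to follow the skeleton of the proof of Theorem~\ref{th:convex} almost verbatim, modifying only the single place where the purely convex argument is wasteful. The per-step primal-progress bounds \eqref{0:full} (full steps) and \eqref{0:constrained} (constrained steps) invoke nothing but smoothness of order $\ell$ together with the defining inequality of the weak-separation oracle and of the criterion in Line~\ref{criterion}, so they transfer unchanged; likewise the epoch decomposition \eqref{0:count} and the per-step analysis are reused as is. The one new ingredient, sharpness, serves to replace the crude constant $\rho$ appearing in \eqref{0:up_stationary} by a quantity that itself shrinks as $\epsilon_t\rightarrow0$, which is precisely what upgrades the sublinear count into a (near-)geometric one.

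Concretely, I would revisit the chain that produced \eqref{0:up_all}. The iterates are bounded and $f(x_t)$ is nonincreasing, so the trajectory stays in a fixed sublevel set; taking $\mathcal{K}$ to contain this set lets sharpness apply at every $x_t$. Instead of bounding $\|x_t-x^*\|\leq\rho$, I would take $x^*\coloneqq\operatorname{proj}_{\argmin_\mathcal{H}f}(x_t)$, so that $\|x_t-x^*\|=\operatorname{dist}(x_t,\argmin_\mathcal{H}f)\leq C\epsilon_t^\theta$ by $C$-sharpness. The inequality feeding \eqref{0:up_all} then becomes $\epsilon_t\leq(2C\epsilon_t^\theta/r)\langle-\nabla f(x_t),v_t^\text{FW}\rangle$, i.e.
\[
 \epsilon_t^{1-\theta}\leq\frac{2C}{r}\left\langle-\nabla f(x_t),v_t^\text{FW}\right\rangle.
\]
At a dual step the oracle certificate gives $\langle-\nabla f(x_t),v_t^\text{FW}\rangle\leq|\phi_t|$, whence $\epsilon_t\leq(2C|\phi_t|/r)^{1/(1-\theta)}$, the sharp analogue of \eqref{0:up_stationary}. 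This at once yields $N_\text{dual}=\mathcal{O}(\log_\tau(C|\phi_0|/(r\epsilon^{1-\theta})))$ and, at an epoch start where $\phi_t=\phi_{t-1}/\tau$, the estimate $\epsilon_t\leq(2C\tau|\phi_t|/r)^{1/(1-\theta)}$.

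The counting then mirrors Theorem~\ref{th:convex}. Within the epoch starting at $t$ the value $\phi_s=\phi_t$ is constant, the total primal progress telescopes to at most $\epsilon_t$, and each full or constrained step contributes at least the bound common to \eqref{0:full} and \eqref{0:constrained}; combining with the sharp epoch-start estimate gives
\[
 N_\text{full}^{(t)}+N_\text{constrained}^{(t)}\leq K\,|\phi_t|^{\frac{1}{1-\theta}-\overline{\ell}},\qquad\overline{\ell}=\frac{\ell}{\ell-1},
\]
with $K$ collecting the constants $\overline{\ell},\max\{\kappa,\eta\},L,D_{\mathcal{D}'},C,\tau,r$. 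The entire dichotomy hinges on the sign of the exponent $\frac{1}{1-\theta}-\overline{\ell}$ (the case $\ell\theta>1$ being excluded by Fact~\ref{fact:upper}). When $\ell\theta=1$ one has $\frac{1}{1-\theta}=\overline{\ell}$, the exponent vanishes, the per-epoch step count is a constant, and $T=N_\text{dual}\cdot\mathcal{O}(1)$ gives the stated logarithmic-in-$\epsilon$ rate. When $\ell\theta<1$ the exponent is negative; since $|\phi_t|=|\phi_0|\tau^{-n}$ decays geometrically across the $\mathcal{O}(\ln1/\epsilon)$ epochs, $\sum_t|\phi_t|^{\frac{1}{1-\theta}-\overline{\ell}}$ is a geometric series dominated by its last term, where $|\phi_t|\sim\epsilon^{1-\theta}$; substituting and simplifying via $(1-\theta)\overline{\ell}-1=(1-\ell\theta)/(\ell-1)$ recovers the stated sublinear rate.

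I expect the main obstacle to be the bookkeeping of this geometric sum: one must track the $|\phi_0|$, $C$, $L$, and $\epsilon$ powers carefully to confirm they coalesce into the clean form $(C^\ell L/\epsilon^{1-\ell\theta})^{1/(\ell-1)}$ — in particular the cancellation of $|\phi_0|$ and the combination of the two $C$-powers ($C^{1/(1-\theta)}$ from $K$ and $C^\beta$ from the dominant term, with $\beta=\overline{\ell}-\frac{1}{1-\theta}$) into $C^{\ell/(\ell-1)}$. A secondary, milder point is the justification that the iterates remain inside $\mathcal{K}$, which rests on coercivity and the monotone decrease of $f(x_t)$. The final distance claim is then immediate: for $t\geq T$ we have $\epsilon_t\leq\epsilon$, so $\operatorname{dist}(x_t,\argmin_\mathcal{H}f)\leq C\epsilon_t^\theta$ by sharpness, which tends to $0$ and inherits the convergence rate of $\epsilon_t$ raised to the power $\theta$.
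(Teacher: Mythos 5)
Your outline reproduces the paper's proof structure almost exactly --- same epoch decomposition, same reuse of the full-step and constrained-step progress bounds \eqref{full} and \eqref{constrained}, same dichotomy on the sign of the exponent $\frac{1}{1-\theta}-\overline{\ell}$, and your constant bookkeeping for the $\ell\theta<1$ case (cancellation of $|\phi_0|$, merging of $C^{1/(1-\theta)}$ and $C^{\alpha}$ into $C^{\ell/(\ell-1)}$) is correct. But there is one genuine gap, and it sits precisely at the step you treat as a ``secondary, milder point'': the claim that ``taking $\mathcal{K}$ to contain [the sublevel set] lets sharpness apply at every $x_t$.'' The set $\mathcal{K}$ is \emph{given} in the hypothesis, together with the constant $C$ that appears in the stated rate; you are not free to enlarge it. Sharpness with constant $C$ on a larger set containing all iterates is simply not among the assumptions, and it need not hold: enlarging $\mathcal{K}$ generally changes $C$ (which the theorem's bound tracks explicitly), and in an infinite-dimensional Hilbert space sharpness on the bigger set may fail altogether --- Lemma~\ref{lem:bolte}, which guarantees sharpness on arbitrary bounded sets, is a finite-dimensional statement for subanalytic functions. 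So your inequality $\|x_t-x_t^*\|\leq C\epsilon_t^\theta$ is unjustified for iterates outside $\mathcal{K}$, and everything downstream of it inherits the problem.

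The paper closes exactly this hole with a contraction argument that you would need to supply. With $x_t^*\coloneqq\operatorname{proj}_{\argmin_\mathcal{H}f}(x_t)$, the hypothesis $\argmin_\mathcal{H}f\subset\operatorname{int}(\mathcal{K})$ yields radii $r_t^*>0$ with $\mathcal{B}(x_t^*,r_t^*)\subseteq\mathcal{K}$; setting $\rho\coloneqq\min_{0\leq t\leq T}r_t^*/\|x_t-x_t^*\|\in\left]0,1\right[$ (a finite minimum, since $T$ is finite by Theorem~\ref{th:convex}), the contracted point $x_t^*+\rho(x_t-x_t^*)$ does lie in $\mathcal{K}$. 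One then checks via the Hilbert projection theorem that $x_t^*$ is still the nearest minimizer to the contracted point (so the left side of the sharpness inequality really is $\rho\|x_t-x_t^*\|$), applies sharpness there, and uses convexity along the segment to bound $f(x_t^*+\rho(x_t-x_t^*))-f(x_t^*)\leq\rho\,\epsilon_t$, obtaining $\|x_t-x_t^*\|\leq(C/\rho^{1-\theta})\,\epsilon_t^\theta$. This is your inequality up to the factor $\rho^{-(1-\theta)}$, which then propagates as a benign constant through the dual-step bound \eqref{up_stationary}, the epoch counts, and the final distance claim, leaving the rest of your argument intact.
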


If $f$ is not strongly convex then \citet{locatello17mpfw} only guarantee a sublinear convergence rate $\mathcal{O}(1/\epsilon)$ for GMP and OMP, while Theorem~\ref{th:bgmp} can still guarantee higher convergence rates, up to linear convergence $\mathcal{O}(\ln 1/\epsilon)$ if $\ell\theta=1$, using sharpness. Note that in the popular case of smooth strongly convex functions of orders $\ell=2$ and $s=2$, Theorem~\ref{th:bgmp} guarantees a linear convergence rate as these functions are sharp of order $\theta=1/2$ (with constant $C=\sqrt{2/S}$) and thus satisfy $\ell\theta=1$. For completeness, we also study this special case in Appendix~\ref{first}, with a simpler proof. In conclusion, Theorem~\ref{th:bgmp} extends linear convergence rates to a large class of non-strongly convex functions solving Problem~\eqref{pb}.

\begin{proof}
Let $\epsilon>0$. By Theorem~\ref{th:convex}, there exists $T\in\mathbb{N}$ such that $f(x_T)-\min_\mathcal{H}f\leq\epsilon$. Let $\epsilon_t\coloneqq f(x_t)-\min_\mathcal{H}f$ for $t\in\llbracket0,T\rrbracket$ and $T=N_\text{dual}+N_\text{full}+N_\text{constrained}$ where $N_\text{dual}$, $N_\text{full}$, and $N_\text{constrained}$ are the number of dual steps (Line~\ref{stationary_step}), full steps (Line~\ref{full_step}), and constrained steps (Line~\ref{constrained_step}) taken in total respectively. Similarly to \citet{pok17lazy}, we introduce \emph{epoch starts} at iteration $t=0$ or any iteration immediately following a dual step. Our goal is to bound the number of epochs and the number of iterations within each epoch. Notice that $0\leq\epsilon_{t+1}\leq\epsilon_t$ and $\phi_t\leq\phi_{t+1}\leq0$ for $t\in\llbracket0,T\rrbracket$.

Let $t\in\llbracket0,T\rrbracket$ be an iteration of the algorithm, $v_t^\text{FW}\in\argmin_{v\in\mathcal{D}'}\langle\nabla f(x_t),v\rangle=\argmin_{z\in\operatorname{conv}(\mathcal{D}')}\langle\nabla f(x_t),z\rangle$, and $x_t^*\coloneqq\operatorname{proj}_{\argmin_\mathcal{H}f}(x_t)$. We can assume that $f(x_t)>f(x_t^*)$ otherwise the iterates have already converged. By convexity, $\langle\nabla f(x_t),x_t^*-x_t\rangle<0$. Since $0\in\operatorname{int}(\operatorname{conv}(\mathcal{D}'))$, there exists $r>0$ such that $\mathcal{B}(0,r)\subseteq\operatorname{conv}(\mathcal{D}')$. Therefore, $\frac{r(x_t^*-x_t)}{2\|x_t^*-x_t\|} \in\operatorname{conv}(\mathcal{D}')$ so
\begin{align*}
\min\limits_{z\in\operatorname{conv}(\mathcal{D}')}\langle\nabla f(x_t),z\rangle
&=\big\langle\nabla f(x_t),v_t^\text{FW}\big\rangle
\leq\left\langle\nabla f(x_t),
\frac{r(x_t^*-x_t)}{2\|x_t^*-x_t\|}\right\rangle<0
\end{align*}
i.e.,
\begin{align}
 \frac{r\langle\nabla f(x_t),x_t-x_t^*\rangle}
 {-2\big\langle\nabla f(x_t),v_t^\text{FW}\big\rangle}\leq\|x_t-x_t^*\|.
 \label{r1}
\end{align}
The sharpness of $f$ implies that $\argmin_\mathcal{H}f\subset
\operatorname{int}(\mathcal{K})$.
Let $r_t^*\in\left]0,\|x_t-x_t^*\|\right[$ such that
$\mathcal{B}(x_t^*,r_t^*)
\subseteq\mathcal{K}$, and let $\rho\coloneqq\min\left\{r_0^*/\|x_0-x_0^*\|,\ldots,r_T^*/\|x_T-x_T^*\|\right\}\in\left]0,1\right[$. Then,
$x_t^*+\rho(x_t-x_t^*)\in\mathcal{B}(x_t^*,r_t^*)
\subseteq\mathcal{K}$.
By convexity, $x_t^*=\operatorname{proj}_{\argmin_\mathcal{H}f}(x_t^*+\rho(x_t-x_t^*))$: indeed, $\langle x_t^*-x^*,x_t-x_t^*\rangle\geq0$ for all $x^*\in\argmin_\mathcal{H}f$ by the Hilbert projection theorem, thus
\begin{align}
 \|(x_t^*+\rho(x_t-x_t^*))-x^*\|^2
 &=\|x_t^*-x^*\|^2+\rho^2\|x_t-x_t^*\|^2+2\rho\langle x_t^*-x^*,x_t-x_t^*\rangle\nonumber\\
 &\geq\rho^2\|x_t-x_t^*\|^2\label{arg}
\end{align}
where \eqref{arg} is an equality if and only if $x^*=x_t^*$. Hence, using sharpness,
\begin{align}
 \rho\|x_t-x_t^*\|
 &=\left\|(x_t^*+\rho(x_t-x_t^*))-x_t^*\right\|\nonumber\\
 &\leq C\big(f(x_t^*+\rho(x_t-x_t^*))-f(x_t^*)\big)^\theta\nonumber\\
 &\leq C\big(f(x_t^*)+\rho(f(x_t)-f(x_t^*))
 -f(x_t^*)\big)^\theta\nonumber\\
 &=C\rho^\theta(f(x_t)-f(x_t^*))^\theta\label{contraction}\\
 &\leq C\rho^{\theta}\langle\nabla f(x_t),x_t-x_t^*\rangle^\theta\nonumber
\end{align}
where the second and last inequalities come from convexity. Combining with \eqref{r1}, we get
\begin{align*}
 \frac{r\langle\nabla f(x_t),x_t-x_t^*\rangle}{-2\big\langle\nabla f(x_t),v_t^\text{FW}\big\rangle}
 \leq\frac{C}{\rho^{1-\theta}}\langle\nabla f(x_t),x_t-x_t^*\rangle^\theta
\end{align*}
so, by convexity, we obtain the primal bound
\begin{align*}
 f(x_t)-f(x_t^*)&\leq\langle\nabla f(x_t),x_t-x_t^*\rangle
 \leq\frac{1}{\rho}\left(-\frac{2C}{r}
 \left\langle\nabla f(x_t),v_t^\text{FW}\right\rangle\right)^{1/(1-\theta)}
\end{align*}
i.e.,
\begin{align}
 \epsilon_t\leq\frac{1}{\rho}
 \left(-\frac{2C}{r}\left\langle\nabla f(x_t),v_t^\text{FW}\right\rangle\right)^{1/(1-\theta)}.
 \label{up_all}
\end{align}
Let $t$ be a dual step (Line~\ref{stationary_step}). Then the weak-separation oracle call (Line~\ref{call}) yields $\left\langle\nabla f(x_t),v_t^\text{FW}\right\rangle\geq\phi_t$. By \eqref{up_all} and Line~\ref{tau},
\begin{align}
 \epsilon_t
 &\leq\frac{1}{\rho}\left(\frac{2C}{r}|\phi_t|\right)^{1/(1-\theta)}
 \label{up_stationary}\\
 &=\frac{1}{\rho}\left(\frac{2C}{r}\frac{|\phi_0|}
 {\tau^{n_\text{dual}}}\right)^{1/(1-\theta)}\label{t_stat}
\end{align}
where $n_\text{dual}$ is the number of dual steps taken before $t$. Therefore, by \eqref{t_stat} and since $\tau>1$ and $\theta\in\left]0,1\right[$,
\begin{align}
N_\text{dual}&\leq\ceil[\bigg]{
\log_\tau\left(\frac{2C|\phi_0|}{r\rho^{1-\theta}\epsilon^{1-\theta}}\right)
 }. \label{stationary}
\end{align}

If a full step is taken (Line~\ref{full_step}), then the weak-separation oracle (Line~\ref{call}) returns $v_t\in\mathcal{D}'$ such that $\langle\nabla f(x_t),v_t\rangle\leq\phi_t/\kappa$. By smoothness and using Fact~\ref{fact} and $\overline{\ell}\coloneqq\ell/(\ell-1)>1$,
\begin{align*}
 f(x_{t+1})
 &\leq\min\limits_{\gamma\in\mathbb{R}_+}
 f(x_t+\gamma v_t)\\
 &\leq\min\limits_{\gamma\in\mathbb{R}_+}f(x_t)
 +\gamma\langle\nabla f(x_t),v_t\rangle
 +\frac{L}{\ell}\gamma^\ell\|v_t\|^\ell\\
 &=f(x_t)-\frac{\langle-\nabla f(x_t),v_t\rangle^{\overline{\ell}}}
 {\overline{\ell}L^{\overline{\ell}-1}\|v_t\|^{\overline{\ell}}}\\
 &\leq f(x_t)-\frac{|\phi_t/\kappa|^{\overline{\ell}}}
 {\overline{\ell}L^{\overline{\ell}-1}(D_{\mathcal{D}'}/2)^{\overline{\ell}}}
\end{align*}
where we used $\|v_t\|\leq D_{\mathcal{D}'}/2$ (by symmetry). Therefore, the primal progress is at least
\begin{align}
 f(x_t)-f(x_{t+1})\geq\frac{2^{\overline{\ell}}|\phi_t|^{\overline{\ell}}}
 {\overline{\ell}\kappa^{\overline{\ell}}L^{\overline{\ell}-1}
 D_{\mathcal{D}'}^{\overline{\ell}}}.\label{full}
\end{align}

Lastly, if a constrained step is taken (Line~\ref{constrained_step}), then by smoothness and using Fact~\ref{fact},
\begin{align*}
 f(x_{t+1})
 &\leq\min\limits_{\gamma\in\mathbb{R}_+}
 f\big(x_t-\gamma\widetilde{\nabla}f(x_t)\big)\\
 &\leq\min\limits_{\gamma\in\mathbb{R}_+}f(x_t)
 -\gamma\big\langle\nabla f(x_t),\widetilde{\nabla}f(x_t)\big\rangle
 +\frac{L}{\ell}\gamma^\ell\big\|\widetilde{\nabla}f(x_t)\big\|^\ell\\
&=f(x_t)-\frac{\big\langle\nabla f(x_t),\widetilde{\nabla}f(x_t)\big\rangle^{\overline{\ell}}}
 {\overline{\ell}L^{\overline{\ell}-1}\big\|\widetilde{\nabla}f(x_t)\big\|^{\overline{\ell}}}\\
 &=f(x_t)-\frac{\big\|\widetilde{\nabla}f(x_t)\big\|^{\overline{\ell}}}
 {\overline{\ell}L^{\overline{\ell}-1}}\\
 &\leq f(x_t)-\frac{\big|\big\langle\widetilde{\nabla}f(x_t),
 v_t^{\text{FW-}\mathcal{S}}\big\rangle\big|^{\overline{\ell}}}
 {\overline{\ell}L^{\overline{\ell}-1}\big\|v_t^{\text{FW-}\mathcal{S}}\big\|^{\overline{\ell}}}\\
 &=f(x_t)-\frac{\big|\big\langle\nabla f(x_t),
 v_t^{\text{FW-}\mathcal{S}}\big\rangle\big|^{\overline{\ell}}}
 {\overline{\ell}L^{\overline{\ell}-1}\big\|v_t^{\text{FW-}\mathcal{S}}\big\|^{\overline{\ell}}}\\
 &\leq f(x_t)-\frac{|\phi_t/\eta|^{\overline{\ell}}}
 {\overline{\ell}L^{\overline{\ell}-1}
 (D_{\mathcal{D}'}/2)^{\overline{\ell}}}
\end{align*}
where the last three lines respectively come from the Cauchy-Schwarz inequality, $v_t^{\text{FW-}\mathcal{S}}\in\operatorname{span}(\mathcal{S}_t)$, $\big\langle\nabla f(x_t),v_t^{\text{FW-}\mathcal{S}}\big\rangle\leq\phi_t/\eta$ (Line~\ref{criterion}), and $\big\|v_t^{\text{FW-}\mathcal{S}}\big\|\leq D_{\mathcal{D}'}/2$ (by symmetry).
Therefore, the primal progress is at least
\begin{align}
 f(x_t)-f(x_{t+1})\geq\frac{2^{\overline{\ell}}|\phi_t|^{\overline{\ell}}}
 {\overline{\ell}\eta^{\overline{\ell}}L^{\overline{\ell}-1}
 D_{\mathcal{D}'}^{\overline{\ell}}}.\label{constrained}
\end{align}
whose lower bound only differs by a constant factor $(\kappa/\eta)^{\overline{\ell}}$ from that of a full step \eqref{full}.

Now, we have
\begin{align}
T&=N_\text{dual}+N_\text{full}+N_\text{constrained}\nonumber\\
&=N_\text{dual}
+\sum_{\substack{t=0\\t\text{ epoch start}}}^{T-1}\Big(N_\text{full}^{(t)}
+N_\text{constrained}^{(t)}\Big)\label{count}
\end{align}
where $N_\text{full}^{(t)}$ and $N_\text{constrained}^{(t)}$ are the number of full steps and constrained steps taken during epoch $t$ respectively. Let $t>0$ be an epoch start. Thus, $t-1$ is a dual step. By \eqref{up_stationary}, since $x_t=x_{t-1}$ and $\phi_t=\phi_{t-1}/\tau$,
\begin{align}
 \epsilon_t\leq\frac{1}{\rho}\left(\frac{2\tau C}
 {r}|\phi_t|\right)^{1/1-\theta}.\label{epoch_up}
\end{align}
This also holds for $t=0$ by \eqref{up_all} and Line~\ref{phi0} (and actually for all $t\in\llbracket0,T\rrbracket$). By \eqref{full} and \eqref{constrained}, since $\phi_s=\phi_t$ for all nondual steps $s$ in the epoch starting at $t$,
\begin{align}
\epsilon_t
&\geq\sum_{s\in\text{epoch}(t)}\big(f(x_s)-f(x_{s+1})\big)\nonumber\\
 &\geq\left(N_\text{full}^{(t)}+N_\text{constrained}^{(t)}\right)
 \frac{2^{\overline{\ell}}|\phi_t|^{\overline{\ell}}}
 {\overline{\ell}\max\{\kappa^{\overline{\ell}},
 \eta^{\overline{\ell}}\}
 L^{\overline{\ell}-1}D_{\mathcal{D}'}^{\overline{\ell}}}.
 \label{epoch_down}
\end{align}
Combining \eqref{epoch_up} and \eqref{epoch_down},
\begin{align}
 N_\text{full}^{(t)}+N_\text{constrained}^{(t)}
 &\leq\frac{1}{\rho}\left(\frac{2\tau C}{r}\right)^{1/(1-\theta)}
 \frac{\overline{\ell}\max\{\kappa^{\overline{\ell}},
 \eta^{\overline{\ell}}\}
 L^{\overline{\ell}-1}D_{\mathcal{D}'}^{\overline{\ell}}}{2^{\overline{\ell}}}
 |\phi_t|^{1/(1-\theta)-\overline{\ell}}.
 \label{full_cons}
\end{align}
Therefore, by \eqref{count} and \eqref{full_cons},
\begin{align}
 T&\leq N_\text{dual}
+\frac{1}{\rho}\left(\frac{2\tau C}{r}\right)^{1/(1-\theta)}
 \frac{\overline{\ell}\max\{\kappa^{\overline{\ell}},
 \eta^{\overline{\ell}}\}
 L^{\overline{\ell}-1}D_{\mathcal{D}'}^{\overline{\ell}}}{2^{\overline{\ell}}}
\sum_{t=0}^{N_\text{dual}}\left(\frac{|\phi_0|}{\tau^t}
\right)^{1/(1-\theta)-\overline{\ell}}\label{toone}\\
&=\begin{cases}
N_\text{dual}+\displaystyle\frac{1}{\rho}\left(\frac{2\tau C}{r}\right)^{1/(1-\theta)}
 \frac{\overline{\ell}\max\{\kappa^{\overline{\ell}},
 \eta^{\overline{\ell}}\}
 L^{\overline{\ell}-1}D_{\mathcal{D}'}^{\overline{\ell}}}{2^{\overline{\ell}}}
 (N_\text{dual}+1)
&\text{if }\ell\theta=1\\
N_\text{dual}+\displaystyle\frac{1}{\rho}\left(\frac{2\tau C}{r}\right)^{1/(1-\theta)}
 \frac{\overline{\ell}\max\{\kappa^{\overline{\ell}},
 \eta^{\overline{\ell}}\}
 L^{\overline{\ell}-1}D_{\mathcal{D}'}^{\overline{\ell}}}{2^{\overline{\ell}}}
|\phi_0|^{1/(1-\theta)-\overline{\ell}}
\frac{\left(\tau^{\overline{\ell}-1/(1-\theta)}\right)^{N_\text{dual}+1}-1}
{\tau^{\overline{\ell}-1/(1-\theta)}-1}
&\text{if }\ell\theta<1
\end{cases}\nonumber
\end{align}
where, if $\alpha\coloneqq\frac{1-\ell\theta}{(\ell-1)(1-\theta)}=\overline{\ell}-\frac{1}{1-\theta}$, by \eqref{stationary} we have
\begin{align*}
 \left(\tau^{\overline{\ell}-1/(1-\theta)}\right)^{N_\text{dual}+1}
 &=\left(\tau^\alpha\right)^{N_\text{dual}+1}\\
 &=\exp\left(\alpha\ln(\tau)(N_\text{dual}+1)\right)\\
 &\leq\exp\left(\alpha\ln(\tau)\left(\log_\tau\left(\frac{2C|\phi_0|}
 {r\rho^{1-\theta}\epsilon^{1-\theta}}\right)+2\right)\right)\\
 &=\exp\left(\alpha\ln\left(\frac{2C|\phi_0|}
 {r\rho^{1-\theta}\epsilon^{1-\theta}}\right)+2\alpha\ln(\tau)\right)\\
 &=\tau^{2\alpha}\left(\frac{2C|\phi_0|}{r\rho^{1-\theta}\epsilon^{1-\theta}}\right)^\alpha.
\end{align*}
By \eqref{stationary}, we conclude that
\begin{align*}
T=\begin{cases}
 \displaystyle\mathcal{O}\left(C^{1/(1-\theta)}L^{1/(\ell-1)}\ln\left(\frac{C|\phi_0|}{\epsilon^{1-\theta}}\right)\right)&\text{if }\ell\theta=1\\
\displaystyle\mathcal{O}\left(\left(\frac{C^\ell L}{\epsilon^{1-\ell\theta}}\right)^{1/(\ell-1)}\right)&\text{if }\ell\theta<1.
\end{cases}
\end{align*}

Finally, by \eqref{contraction},
\begin{align*}
 \|x_t-x_t^*\|\leq\frac{C}{\rho^{1-\theta}}\epsilon_t^\theta
\end{align*}
for all $t\in\mathbb{N}$. Thus,
$\|x_t-x_t^*\|\rightarrow0$ as $t\rightarrow+\infty$.
\end{proof}

\begin{remark}[(Optimality of the convergence rates)]
Let $n\leq+\infty$ be the dimension of $\mathcal{H}$. \citet{nem85opt} provided unimprovable rates when solving Problem~\eqref{pb} in different cases. These optimal rates are reported in Table~\ref{table}, where we compare them to those of BMP proved in this paper (Theorems~\ref{th:convex} and \ref{th:bgmp}). The third column gives the lower bounds on complexity stated in \citet[Equations~(1.20), (1.21'), and~(1.21)]{nem85opt}. Note that our rates are dimension independent and hold globally across iterations. It remains an open question to determine whether the gap in the exponent can be closed by accelerating BMP.

\begin{table}[H]
\caption{Comparison of the rates of BMP vs.~the lower bounds on complexity.}
\label{table}
\centering
 \begin{tabular}{lll}
  \toprule
  \textbf{Properties of $f$}&\textbf{BMP rate}
  &\textbf{Lower bound on complexity}\\
  \midrule
  Smooth convex
  &$T(\epsilon)=\mathcal{O}\left(\displaystyle
  \frac{1}{\epsilon^{1/(\ell-1)}}\right)$
  &$T(\epsilon)=\Omega\left(\min\left\{n,
  \displaystyle\frac{1}
  {\epsilon^{1/(1.5\ell-1)}}\right\}\right)$\\
 Smooth convex sharp
 &$T(\epsilon)=\mathcal{O}\left(\ln\left(\displaystyle\frac{1}
 {\epsilon}\right)\right)$
 &$T(\epsilon)=\Omega\left(\min\left\{n,\ln\left(\displaystyle\frac{1}
 {\epsilon}\right)\right\}\right)$\\
 $\quad$with $\ell=2$, $\theta=1/2$&&\\
  Smooth convex sharp
  &$T(\epsilon)=\mathcal{O}\left(\displaystyle\frac{1}
  {\epsilon^{(1-\ell\theta)/(\ell-1)}}\right)$
  &$T(\epsilon)=\Omega\left(\min\left\{n,
  \displaystyle\frac{1}
 {\epsilon^{(1-\ell\theta)/(1.5\ell-1)}}\right\}\right)$\\
 $\quad$with $\ell\theta<1$&&\\
  \bottomrule
\end{tabular}
\end{table}
\end{remark}

\section{Computational experiments}
\label{sec:experiments}

We implemented BMP in Python~3 along with GMP and OMP \citep{locatello17mpfw}, the Accelerated Matching Pursuit algorithm (accMP) \citep{locatello18mpcd}, and the Blended Conditional Gradients (BCG) \citep{pok18bcg} and Conditional Gradient with Enhancement and Truncation (CoGEnT) \citep{rao15} algorithms for completeness. All algorithms share the same code framework to ensure fair comparison both in iteration and wall-clock time performance, as well as for sparsity analysis. No enhancement beyond basic coding was performed. We ran the experiments on a laptop under Linux Ubuntu 18.04 with Intel Core i7 3.5GHz CPU and 8GB RAM. The random data are drawn from Gaussian distributions. For GMP, OMP, BCG, and CoGEnT, we represented the dual gaps by $-\min_{v\in\mathcal{D}'}\langle\nabla f(x_t),v\rangle$, yielding a zig-zag plot dissimilar to the stair-like plot of the dual gap estimates $|\phi_t|$ of BMP. The Appendix contains additional experiments. 

\subsection{Comparison of BMP vs.~GMP, OMP, BCG, and CoGEnT}
\label{cogent}

Let $\mathcal{H}$ be the Euclidean space $(\mathbb{R}^n,\langle\cdot,
\cdot\rangle)$ and $\mathcal{D}$ be the set of signed canonical vectors
$\{\pm e_1,\ldots,\pm e_n\}$. Suppose we want to learn the 
(sparse) source $x^*$ from observed data $y\coloneqq Ax^*+w$,
where $A\in\mathbb{R}^{m\times n}$ and where $w\sim\mathcal{N}(0,\sigma^2I_m)$ is the noise in the observed $y$. The general and most intuitive formulation of the problem is:
\begin{align*}
 \min_{x\in\mathbb{R}^n}\;&\|y-Ax\|_2^2\\
 \text{s.t.}\;&\|x\|_0\leq\|x^*\|_0\eqqcolon s
\end{align*}
but the $\ell_0$-pseudo norm constraint $\|\cdot\|_0:x\in\mathbb{R}^n\mapsto\operatorname{card}(\{i\in\llbracket1,n\rrbracket\mid\langle e_i,x\rangle\neq0\})$ is nonconvex and makes the problem NP-hard and therefore intractable in many situations \citep{nphard95}. To remedy this, this sparsity constraint can be handled in various ways, either by completely removing it and relying on an algorithm inherently promoting sparsity, or through a convex relaxation of the constraint, often via the $\ell_1$-norm, and then solving the new constrained convex problem. BMP, GMP, and OMP follow the first option and solve the unconstrained (and unregularized) problem:
\begin{align*}
 \min_{x\in\mathbb{R}^n}\;&\|y-Ax\|_2^2.
\end{align*}
On the other hand, BCG and CoGEnT follow the second option and solve the relaxed constrained problem:
\begin{align*}
 \min_{x\in\mathbb{R}^n}\;&\|y-Ax\|_2^2\\
 \text{s.t.}\;&\|x\|_1\leq\|x^*\|_1.
\end{align*}

We ran a comparison of these methods, where we favorably provided the constraint $\|x\|_1\leq\|x^*\|_1$ for BCG and CoGEnT although 
$x^*$ is unknown. We set $m=500$, $n=2000$, $s=100$, and $\sigma=0.05$. In BMP, we set $\kappa=\tau=2$ and we chose $\eta=5$; see Appendix~\ref{sec:eta} for an in-depth sensitivity analysis of BMP with respect to $\eta$. We did not perform any additional correction of the active sets (Line~\ref{correct}). Note that \citep[Table~III]{rao15} demonstrated the superiority of CoGEnT over CoSaMP \citep{cosamp}, Subspace Pursuit \citep{subspace}, and Gradient Descent with Sparsification \citep{gd09} on an equivalent experiment and we therefore do not compare to those methods.\\

Figure~\ref{figcomp} shows that BMP is the fastest algorithm in wall-clock time and has close-to-optimal sparsity. It is important to stress that, unlike BCG and CoGEnT, BMP achieves this while having no explicit sparsity-promoting constraint, regularization, nor information on $x^*$. Thus, when $\|x^*\|_1$ is not provided, which is the case in most applications, BCG and CoGEnT would require a hyper-parameter tuning of the sparsity-inducing constraint (or, equivalently, the Lagrangian penalty parameters), such as the radius of the $\ell_1$-ball \citep{tib96lasso}, as used here, or the trace-norm-ball \citep{fazel01trace}. OMP and CoGEnT converge faster per-iteration, as expected, given that they solve a reoptimization problem at each iteration, however this is very costly and the disadvantage becomes evident in wall-clock time performance. Note that another ``obvious'' choice for an algorithm would be projected gradient descent, however the provided sparsity is far from sufficient; see Appendix~\ref{sec:pgd}.

\begin{figure}[H]
\centering
\includegraphics[scale=0.6]{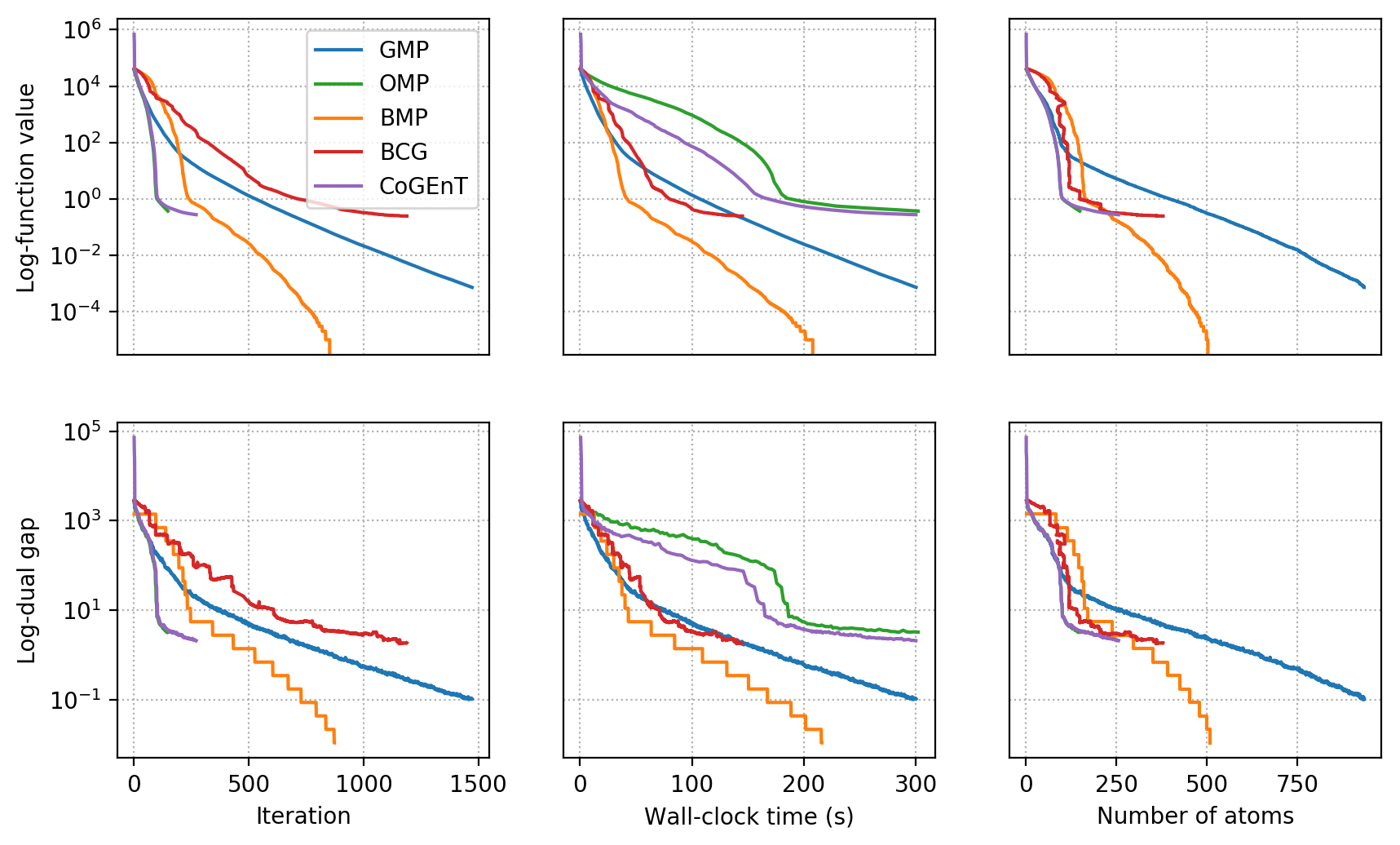}
\caption{Comparison of BMP vs.~GMP, OMP, BCG, and CoGEnT, with $\eta=5$.}
\label{figcomp}
\end{figure}

In Figure~\ref{nmse1}, we compare the Normalized Mean Squared Error (NMSE) of the different methods. The NMSE at iterate $x_t$ is defined as $\|x_t-x^*\|_2^2/\|x^*\|_2^2$. The plots show a rebound occurring once the NMSE reaches $\sim10^{-4}$, which is due to the algorithms overfitting to the noisy measurements $y$. A post-processing step can mitigate the rebound via early stopping or by removing atoms whose coefficient in the decomposition of the iterate are smaller than some threshold $\delta>0$.

\begin{figure}[H]
\centering
\includegraphics[scale=0.6]{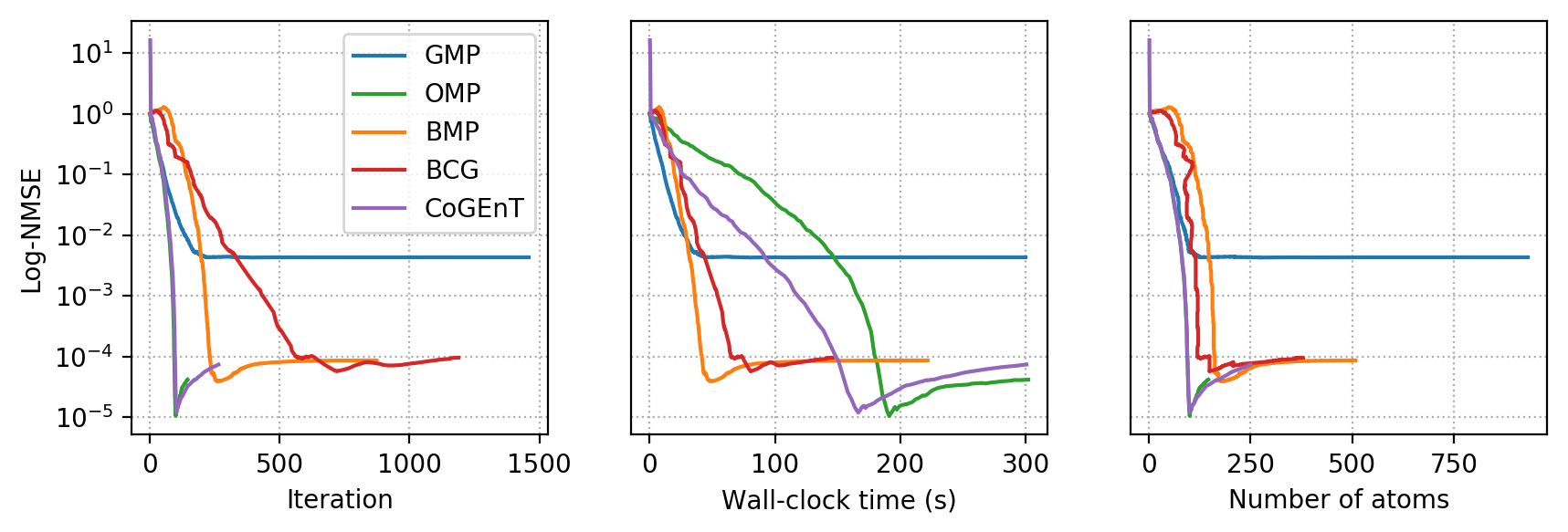}
\caption{Comparison in NMSE of BMP vs.~GMP, OMP, BCG, and CoGEnT, with $\eta=5$.}
\label{nmse1}
\end{figure}

We used early stopping on a validation set and present the test error $\|y_\text{test}-A_\text{test}x_T\|_2^2/m_\text{test}$ on a test set in Table~\ref{ml}, where $x_T$ is the solution iterate for each algorithm. For completeness, we also reported the results for the Gradient Hard Thresholding Pursuit (GraHTP) and Fast Gradient Hard Thresolding Pursuit (Fast GraHTP) algorithms \citep{ghtp18}, for which we favorably set $k=\|x^*\|_0$. As expected, GMP performs the worst on the test set because its NMSE does not achieve sufficient convergence (see Figure~\ref{nmse1}), highlighting the importance of a clean, i.e., sparse, decomposition into the dictionary $\mathcal{D}$.

\begin{table}[h]
\centering
 \begin{tabular}{cccccccc}
  \toprule
  \textbf{Algorithm}&GMP&OMP&BMP&BCG&CoGEnT&GraHTP&Fast GraHTP\\
  \midrule
  \textbf{Test error}&0.1917&0.0036&0.0037&0.0068&0.0043&0.0036&0.0037\\
  \bottomrule
\end{tabular}
\caption{Test error achieved using early stopping on a validation set.}
\label{ml}
\end{table}

The Appendix contains additional experiments on different objective functions: an arbitrarily chosen norm (Appendix~\ref{sec:arbitrary}), the Huber loss (Appendix~\ref{huber}), the distance to a convex set (Appendix~\ref{unitball}), and a logistic regression loss (Appendix~\ref{sec:gisette}). The conclusions are identical.

\subsection{Comparison of BMP vs. accMP}

\citet{locatello18mpcd} recently provided an Accelerated Matching Pursuit algorithm (accMP) for solving Problem~\eqref{pb}. We implemented the same code as theirs, using the exact same parametrization. The code framework matches the one we used for BMP. We ran BMP on their toy data example and compared the results against accMP (which they labeled \emph{accelerated steepest} in their plot); notice that we recovered their (per-iteration) plot exactly. The experiment is to minimize $f:x\in\mathbb{R}^{100}\mapsto\|x-b\|^2_2/2$ over the linear span of $\mathcal{D}$, where $\mathcal{D}$ is dictionary of $200$ randomly chosen atoms in $\mathbb{R}^{100}$ and $b\in\mathbb{R}^{100}$ is also randomly chosen. The parameters of accMP, kindly provided by the authors of \citet{locatello18mpcd}, were $L=1000$ and $\nu=1$. As before we did not perform any additional correction of the active sets (Line~\ref{correct}) for BMP. We report the results in Figure~\ref{fig2}.

\begin{figure}[H]
\centering
\includegraphics[scale=0.6]{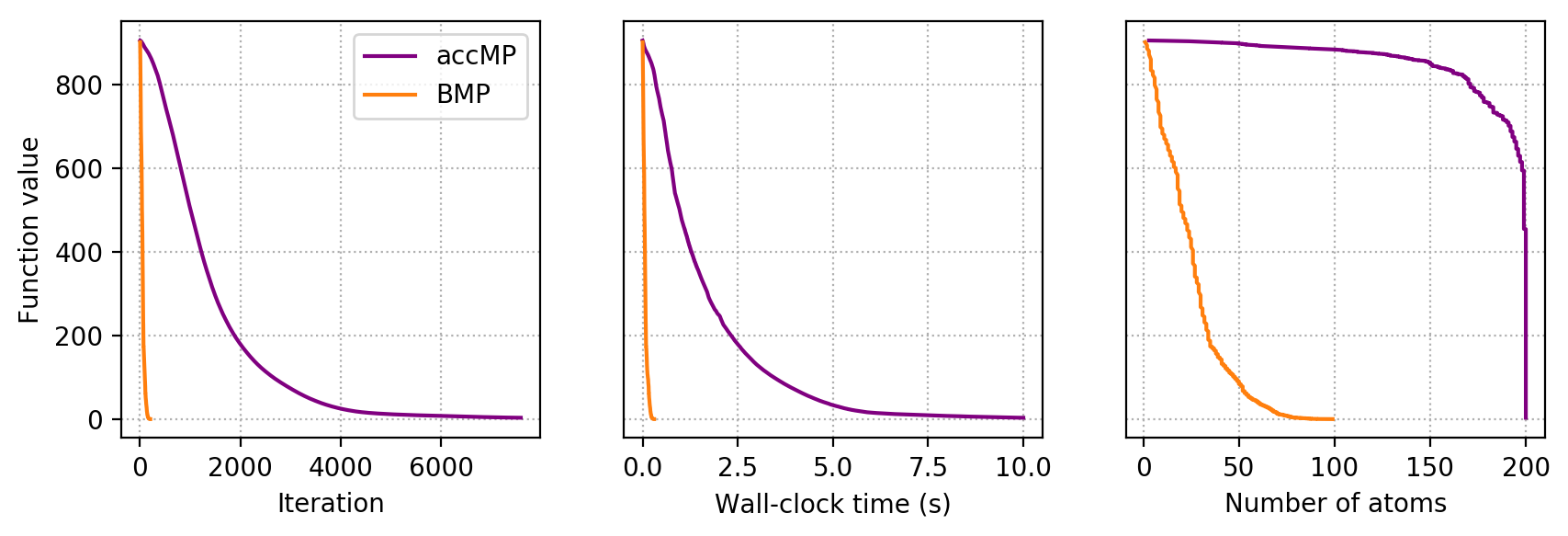}
\caption{Comparison of BMP vs.~accMP, with $\eta=3$.}
\label{fig2}
\end{figure}

We see that BMP outperforms accMP in both speed of convergence and sparsity of the iterates. In fact, in terms of sparsity, accMP needs to use all available atoms to converge while BMP needs only half as much. Furthermore, accMP needs $\sim75\%$ of all available atoms to start converging significantly while BMP starts to converge instantaneously. We suspect that this is due to the following: accMP accelerates coordinate descent-like directions, which might be relatively bad approximations of the actual descent direction $-\nabla f(x_t)$, whereas BMP is working directly with (the projection of) $-\nabla f(x_t)$, achieving much more progress and offsetting the effect of acceleration.

\section{Final remarks}

We presented a Blended Matching Pursuit algorithm (BMP) which enjoys both properties of fast rate of convergence and sparsity of the iterates. More specifically, we derived linear convergence rates for a large class of non-strongly convex functions solving Problem~\eqref{pb}, and we showed that our blending approach outperforms the state-of-the-art methods in speed of convergence while achieving close-to-optimal sparsity, and this without requiring sparsity-inducing constraints nor regularization. Although BMP already outperforms the Accelerated Matching Pursuit algorithm \citep{locatello18mpcd} in our experiments, we believe it is also amenable to acceleration.

\subsection*{Acknowledgments}

Research reported in this paper was partially supported by NSF CAREER award CMMI-1452463.

\bibliographystyle{abbrvnat}
{\small\bibliography{biblio-arxiv}}

\clearpage
\appendix

\section{Additional computational experiments}
\label{addexp}

We provide the sensitivity analysis to the experiment in Figure~\ref{figcomp} in Appendix~\ref{sec:eta}, and the comparison to the projected gradient method in Appendix~\ref{sec:pgd}. We then conduct additional experiments on a variety of objective functions: an arbitrarily chosen norm (Appendix~\ref{sec:arbitrary}), the Huber loss (Appendix~\ref{huber}), the distance to a convex set (Appendix~\ref{unitball}), and a logistic regression loss (Appendix~\ref{sec:gisette}).

\subsection{Sensitivity of BMP to the parameter $\eta$}
\label{sec:eta}

Here we report the sensitivity analysis of BMP for the data in 
Section~\ref{cogent}. We ran BMP (Algorithm~\ref{bgmp}) for values of $\eta$ in $\left\{100, 10, 5, 2, 1\right\}$. We set $\kappa=2$ and $\tau=2$ and did not activate the correction of atoms (Line~\ref{correct}). We report the results in Figure~\ref{fig3}.

\begin{figure}[H]
\centering
\includegraphics[scale=0.6]{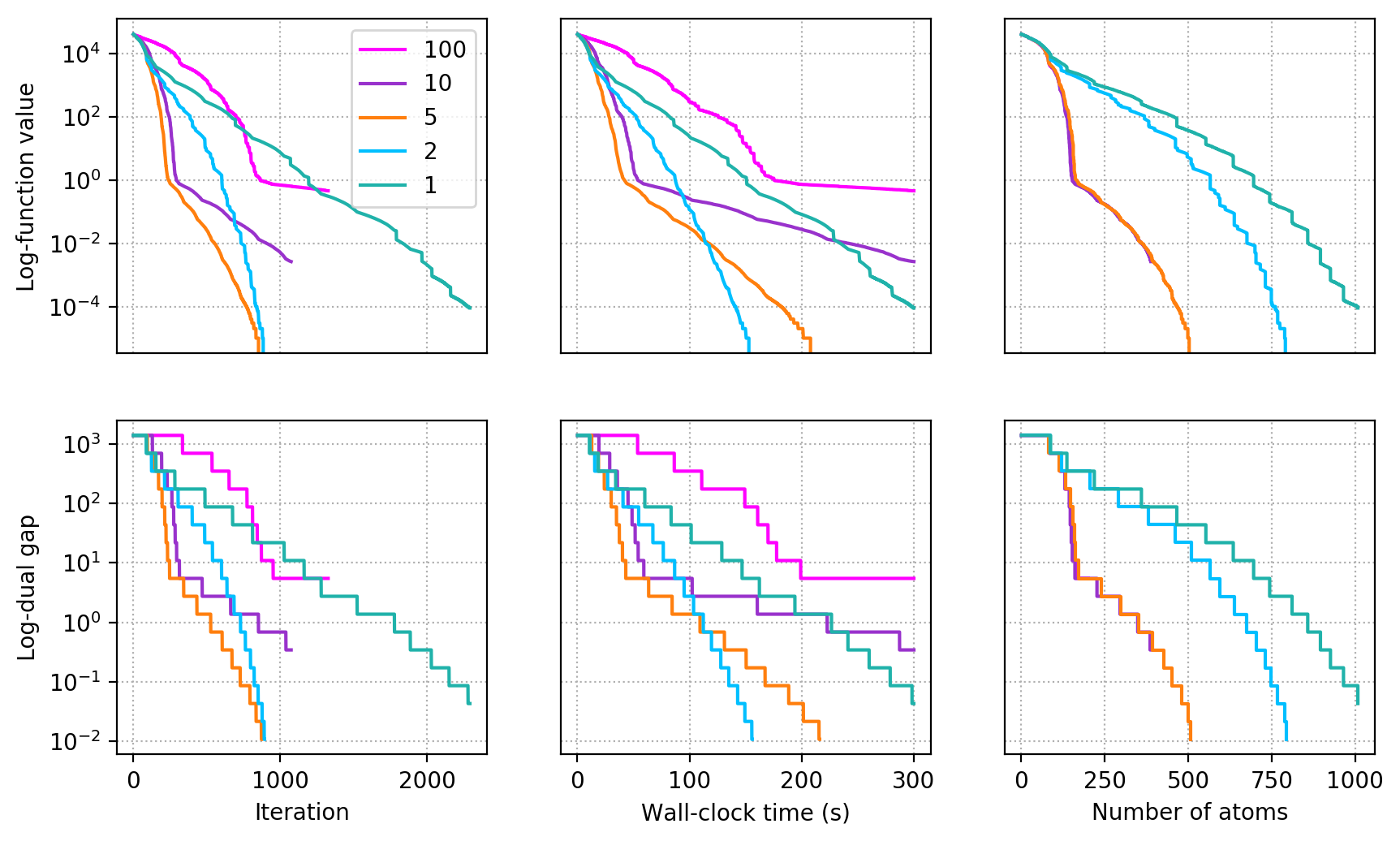}
\caption{Sensitivity of BMP to the parameter $\eta$.}
\label{fig3}
\end{figure}

We see that $\eta=5$ is at the sweet spot between speed of convergence and sparsity of the iterates. Higher values of $\eta$ have similar levels of sparsity but they perform worse for speed of convergence. Lower values of $\eta$ perform much worse in sparsity and are not better in speed; $\eta=2$ offsets $\eta=5$ after $100$ seconds but the function value is already $10^{-2}$ at that point. Therefore, by setting $\eta=5$ in this example we achieve both speed of convergence and sparsity of the iterates. Similar insights are obtained in the other experiments, so that \(\eta\sim5\) seems to be a good initial choice.\\

For completeness, we present in Figure~\ref{nmse2} the sensitivity of BMP to $\eta$ in NMSE.

\begin{figure}[H]
\centering
\includegraphics[scale=0.6]{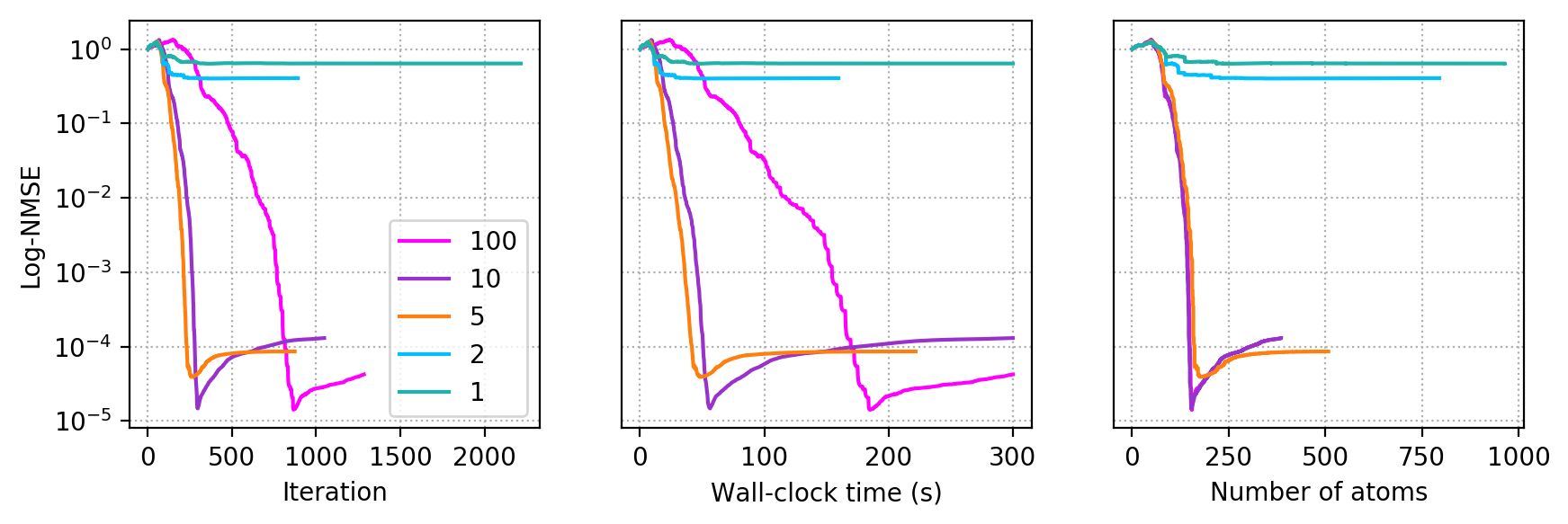}
\caption{Sensitivity of BMP to the parameter $\eta$ in NMSE.}
\label{nmse2}
\end{figure}

\subsection{Comparison with PGD}
\label{sec:pgd}

Projected gradient descent (PGD) is a natural candidate for the experiment in Section~\ref{cogent}. However, it does not ensure sufficient sparsity of the iterates. We depict three configurations of PGD in Figure~\ref{fig:pgd}, each named ``PGD:$\alpha$'' where PGD is ran with the constraint $\|x\|_1\leq\alpha\|x^*\|_1$ and $\alpha\in\{1/2,1,2\}$. The implementation of PGD is in line with our general code framework and we used the method of \citet{condat16} for projections onto the $\ell_1$-ball. The number of atoms collected by the iterates in PGD are reported as the number of nonzero coordinates. Note that in BMP, GMP, and OMP we do not check if a selected atom $v_t$ already satisfies $-v_t\in\mathcal{S}_t$ before adding it to $\mathcal{S}_t$, which is disadvantageous to these algorithms when evaluating their sparsity performance.

\begin{figure}[H]
\centering
\includegraphics[scale=0.6]{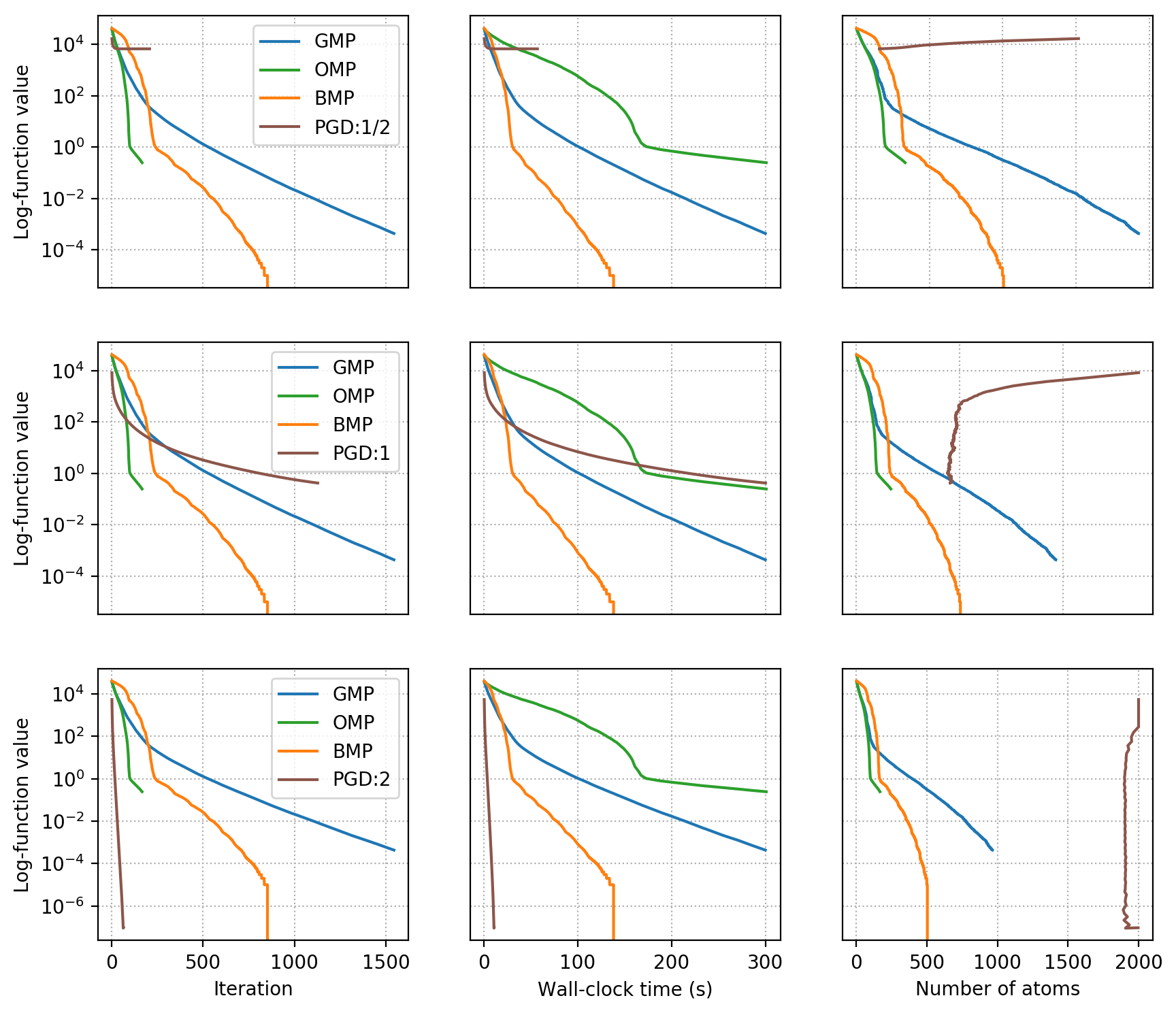}
\caption{Comparison of PGD vs.~the MP algorithms.}
\label{fig:pgd}
\end{figure}

As expected, the constraint $\|x\|_1\leq\|x^*\|_1$ provides the best results for PGD; the constraint $\|x\|_1\leq2\|x^*\|_1$ is too loose and basically produces no sparsity in the iterates (recall that the ambient space is $\mathbb{R}^{2000}$). In the configuration $\|x\|_1\leq\|x^*\|_1$, PGD does not converge faster than OMP and produces significantly worse sparsity than OMP and BMP.

\subsection{Regression with arbitrarily chosen norm}
\label{sec:arbitrary}

We set $m=250$, $n=1000$, $s=50$, and $\sigma=0.05$ and generated the data as in Section~\ref{cogent}. We ran a comparison with the arbitrarily chosen $f:x\in\mathbb{R}^{n}\mapsto\|Ax-b\|_3^5$. We plot the results in Figure~\ref{fig4}.

\begin{figure}[H]
\centering
\includegraphics[scale=0.6]{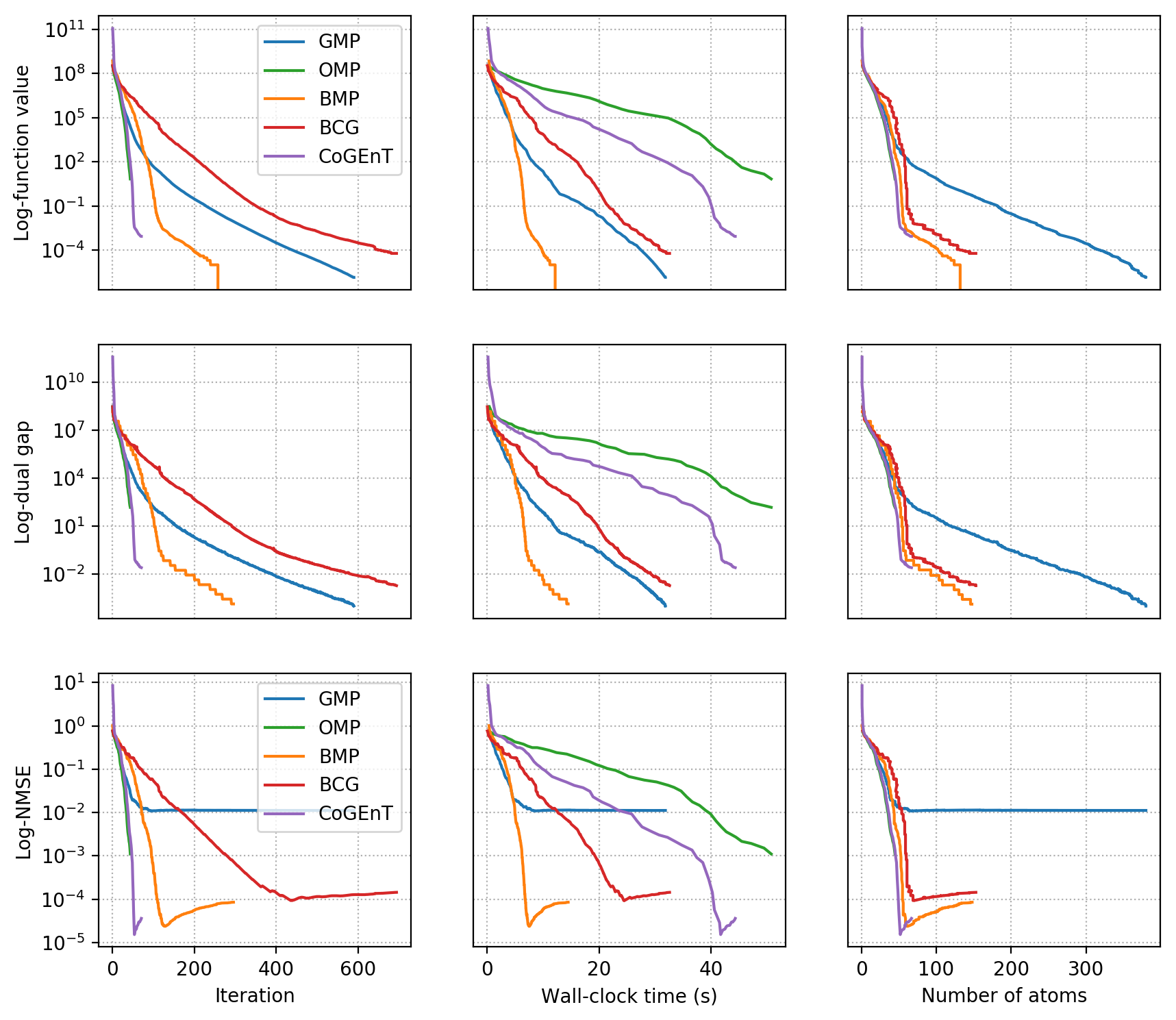}
\caption{Comparison of BMP vs.~GMP and OMP on
$f:x\in\mathbb{R}^{n}\mapsto\|Ax-b\|_3^5$, with
$\eta=5$.}
\label{fig4}
\end{figure}

We see that BMP offers very close-to-optimal levels of sparsity while being faster than the other algorithms in wall-clock time. We provide a sensitivity analysis of BMP to the parameter $\eta$ in Figure~\ref{eta_74}. The scaling is not exactly the same as in Figure~\ref{fig4} due to the randomness in the generation of the data.
We see that $\eta\sim5$ is an appropriate choice combining the best of speed and sparsity.

\begin{figure}[H]
\centering
\includegraphics[scale=0.6]{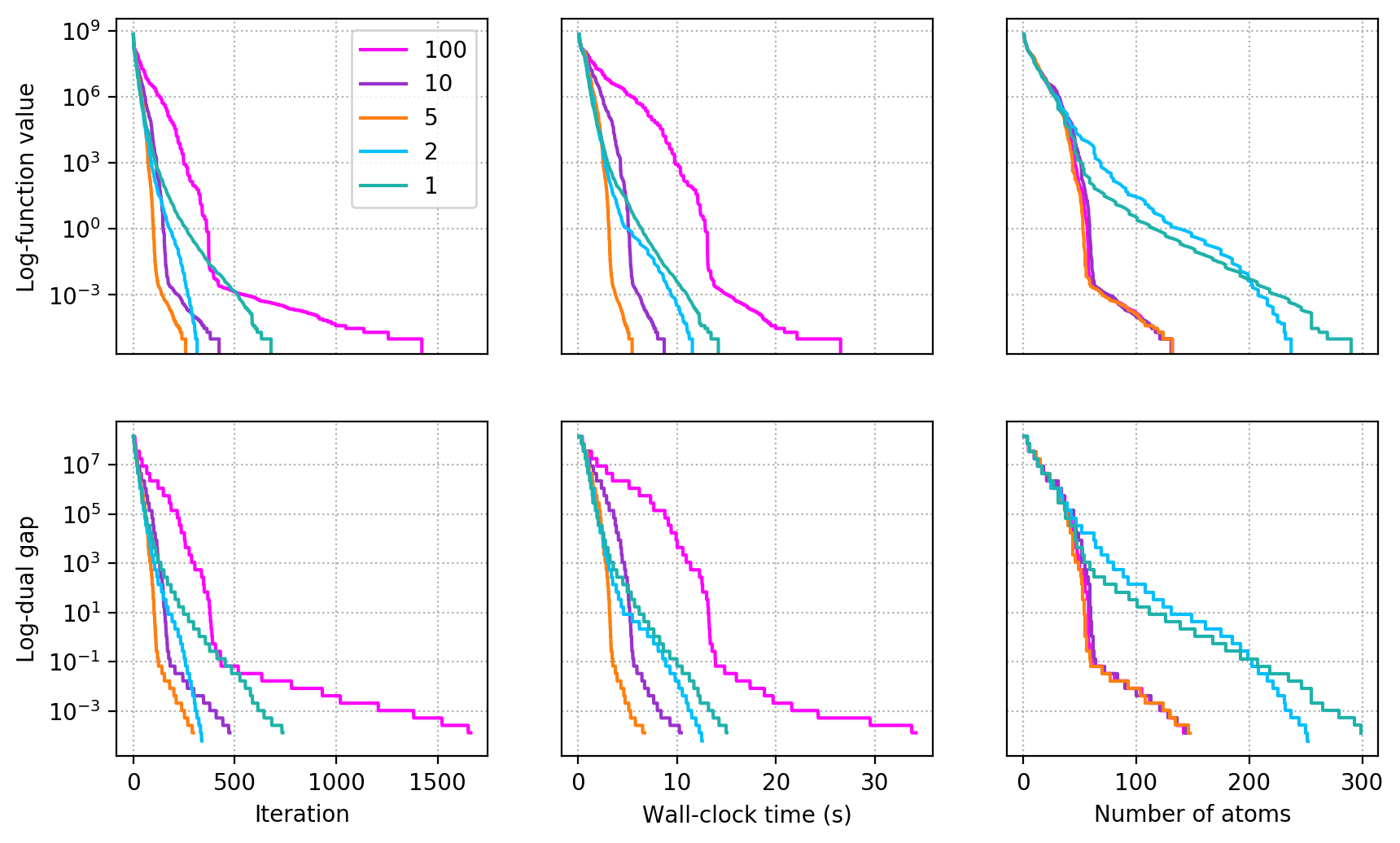}
\caption{Sensitivity of BMP to the parameter $\eta$.}
\label{eta_74}
\end{figure}

For completeness, we present in Figure~\ref{nmse5} the sensitivity of BMP to $\eta$ in NMSE.

\begin{figure}[H]
\centering
\includegraphics[scale=0.6]{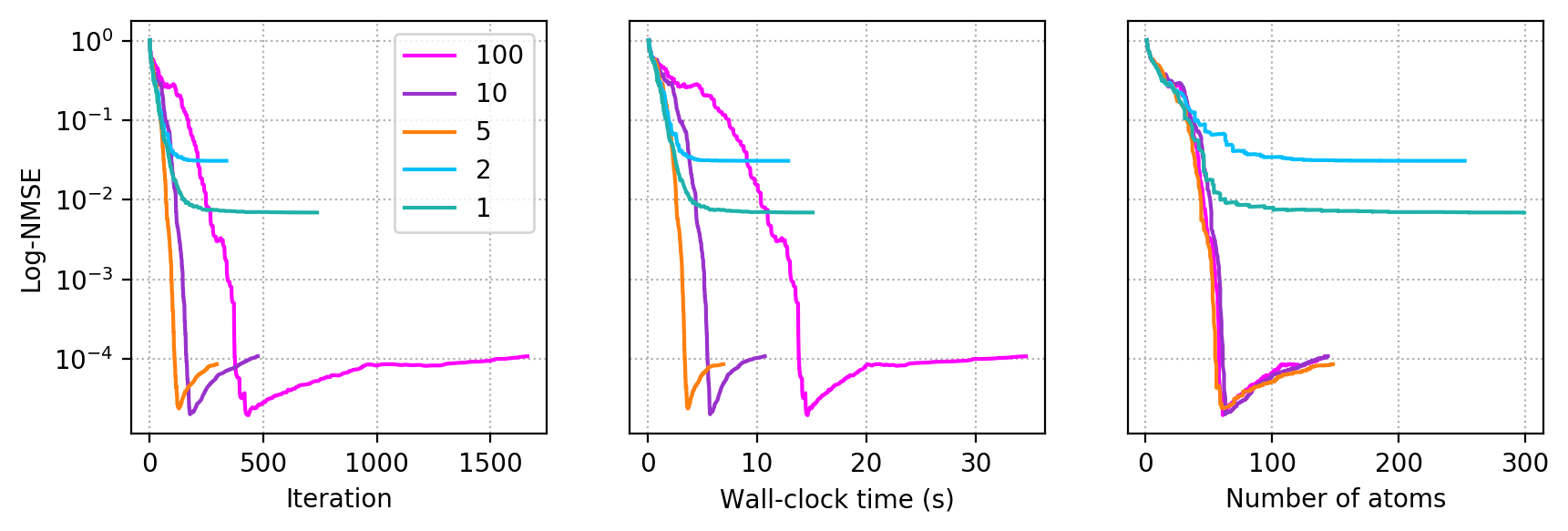}
\caption{Sensitivity of BMP to the parameter $\eta$ in NMSE.}
\label{nmse5}
\end{figure}

\clearpage
\subsection{Huber loss}
\label{huber}

The Huber loss \citep{huber64} is a smooth combination of the squared and absolute losses. The absolute loss is robust to outliers in the dataset, however its gradient is piecewise constant and not defined at the origin. This leads to instability of the solutions. The Huber loss overcomes this by behaving like the squared loss around the origin:
\begin{align*}
h_\delta:t\in\mathbb{R}\mapsto\begin{cases}
t^2/2&\text{ if }|t|\leq\delta\\
\delta(|t|-\delta/2)&\text{ else}
\end{cases}
\end{align*}
where $\delta>0$ defines this region around the origin. Note that the Huber loss is not strongly convex, as it is affine for $t>\delta$, however it is sharp as strong convexity holds around the origin.\\

We set $m=250$, $n=1000$, $s=50$, and $\sigma=0.05$ and generated the data as in Section~\ref{cogent}. In this experiment we aim at minimizing the smooth convex function
\begin{align*}
 f:x\in\mathbb{R}^{n}\mapsto\sum_{i=1}^{m}h_{10}(a_i^\top x-y_i)
\end{align*}
where $a_1^\top,\ldots,a_{m}^\top\in\mathbb{R}^{1\times n}$ are the rows of $A$. We plot the results in Figure~\ref{fig6}.

\begin{figure}[H]
\centering
\includegraphics[scale=0.6]{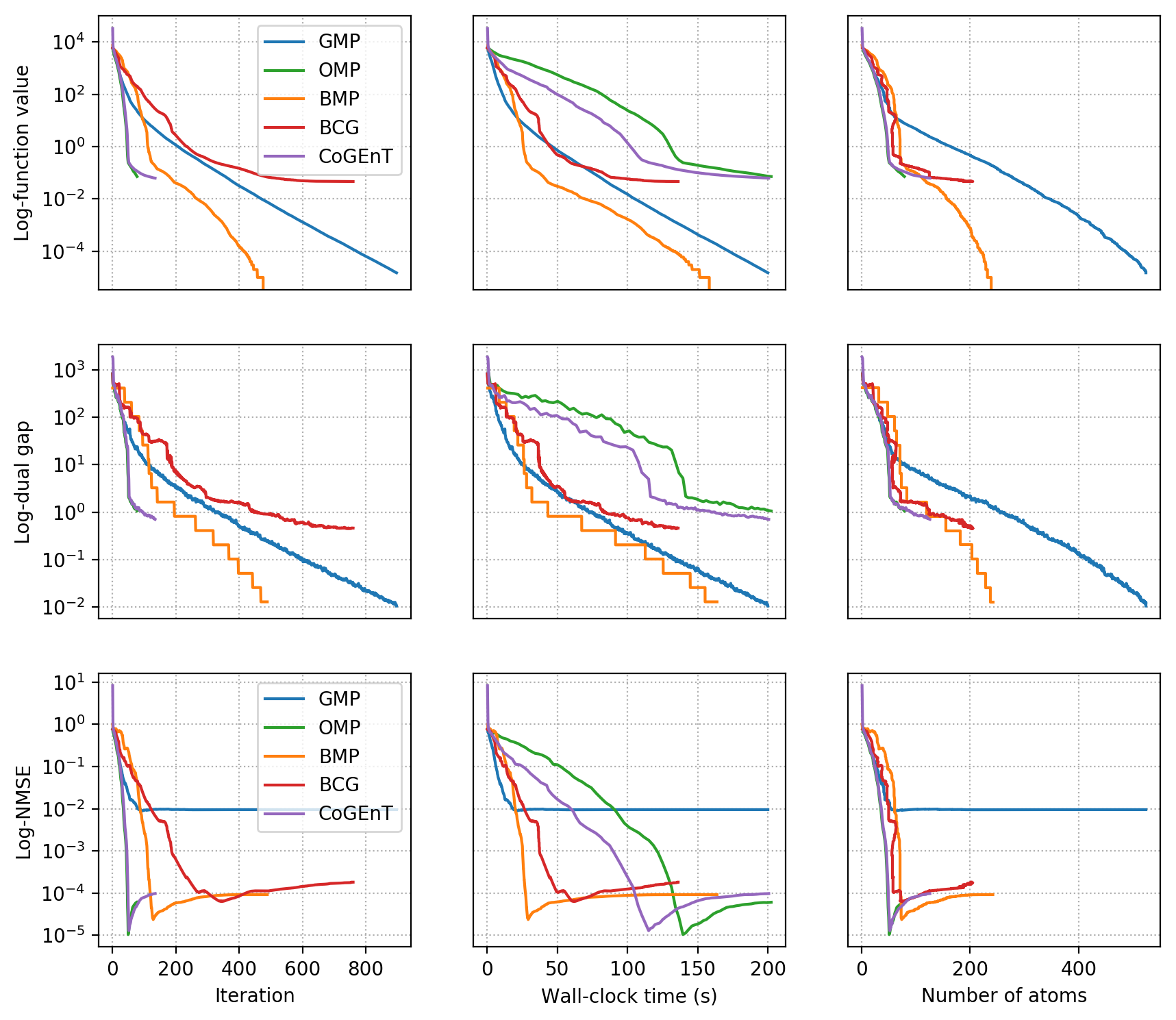}
\caption{Comparison of BMP vs.~GMP and OMP on
$f:x\in\mathbb{R}^{n}\mapsto\sum_{i=1}^{m}h_{10}(a_i^\top x-y_i)$,
with $\eta=5$.}
\label{fig6}
\end{figure}

Again, BMP has very close-to-optimal levels of sparsity while being the fastest algorithm to converge. We provide a sensitivity analysis of BMP to the parameter $\eta$ in Figure~\ref{eta_huber}. The scaling is not exactly the same as in Figure~\ref{fig6} due to the randomness in the generation of the data. We see that $\eta\sim5$ is an appropriate choice combining the best of speed and sparsity.

\begin{figure}[H]
\centering
\includegraphics[scale=0.6]{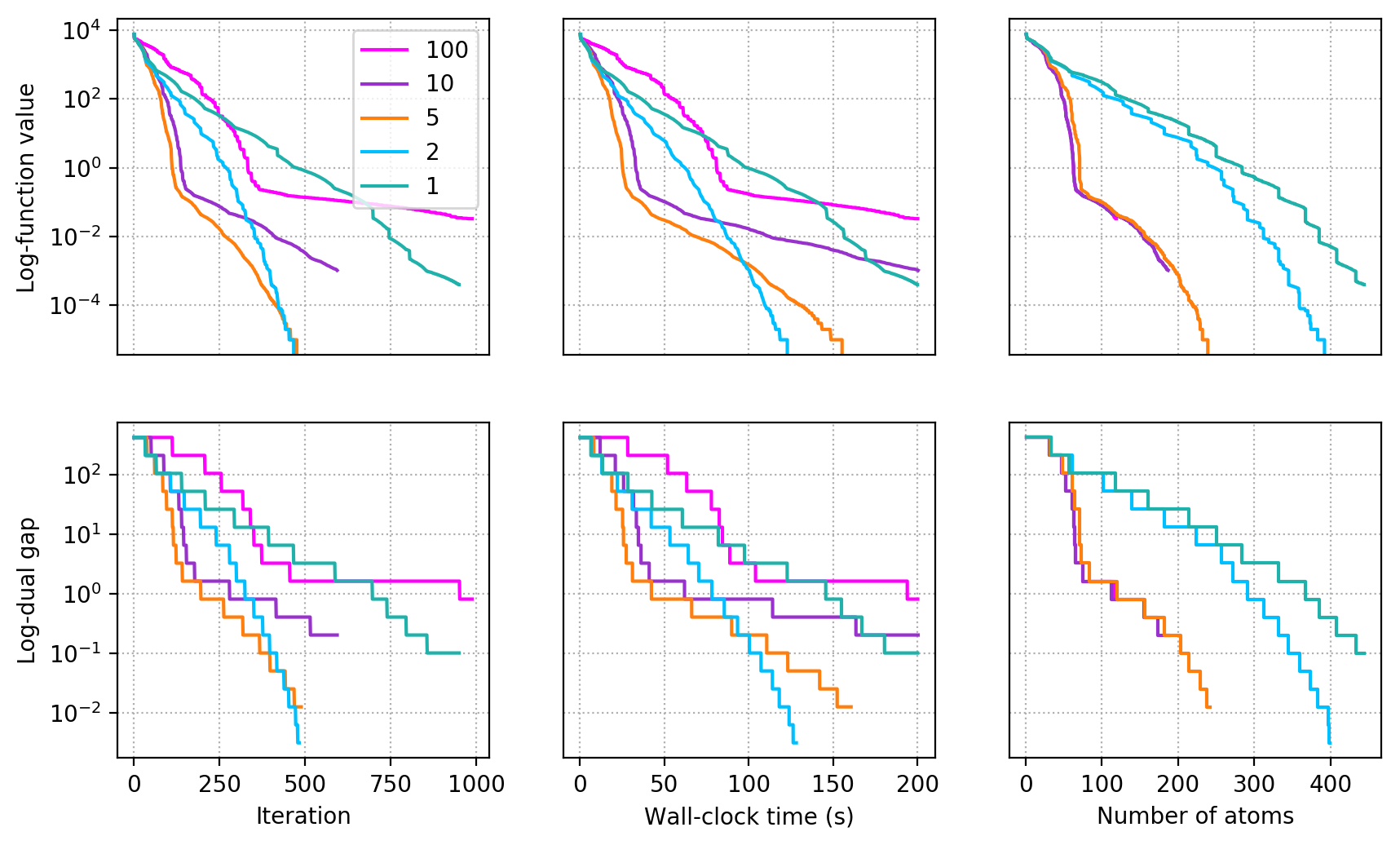}
\caption{Sensitivity of BMP to the parameter $\eta$.}
\label{eta_huber}
\end{figure}

For completeness, we present in Figure~\ref{nmse7} the sensitivity of BMP to $\eta$ in NMSE.

\begin{figure}[H]
\centering
\includegraphics[scale=0.6]{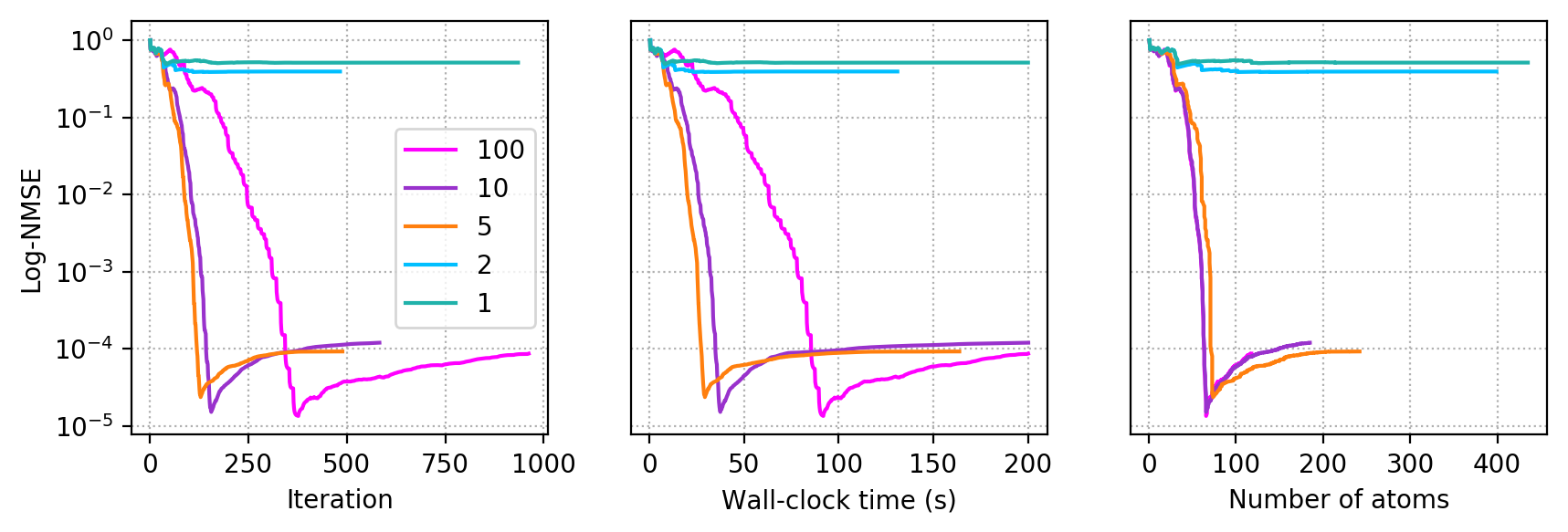}
\caption{Sensitivity of BMP to the parameter $\eta$ in NMSE.}
\label{nmse7}
\end{figure}

\subsection{Distance to a convex set}
\label{unitball}

Here we compared BMP vs.~GMP and OMP on an arbitrarily chosen problem. We used
\begin{align*}
 f:x\in\mathbb{R}^{500}\mapsto d_{\overline{\mathcal{B}}(0,1)}(Ax-b)^2
=\|(Ax-b)-\operatorname{proj}_{\overline{\mathcal{B}}(0,1)}(Ax-b)\|_2^2
\end{align*} 
and $\mathcal{D}$ a dictionary of 750 atoms randomly chosen in $\mathbb{R}^{500}$, where $A\in\mathbb{R}^{500\times500}$ and $b\in\mathbb{R}^{500}$ are also randomly chosen. We did not reduce $f$ to a closed-form expression simplifying computations. This is not a setting where BCG or CoGEnT can be applied. We depict two configurations of BMP: one with emphasis on sparsity of the iterates and one with emphasis on speed of convergence. The parameters $\eta_\text{sparse}$ and $\eta_\text{fast}$ were both optimized. We plot the results in Figure~\ref{fig5}.

\begin{figure}[H]
\centering
\includegraphics[scale=0.6]{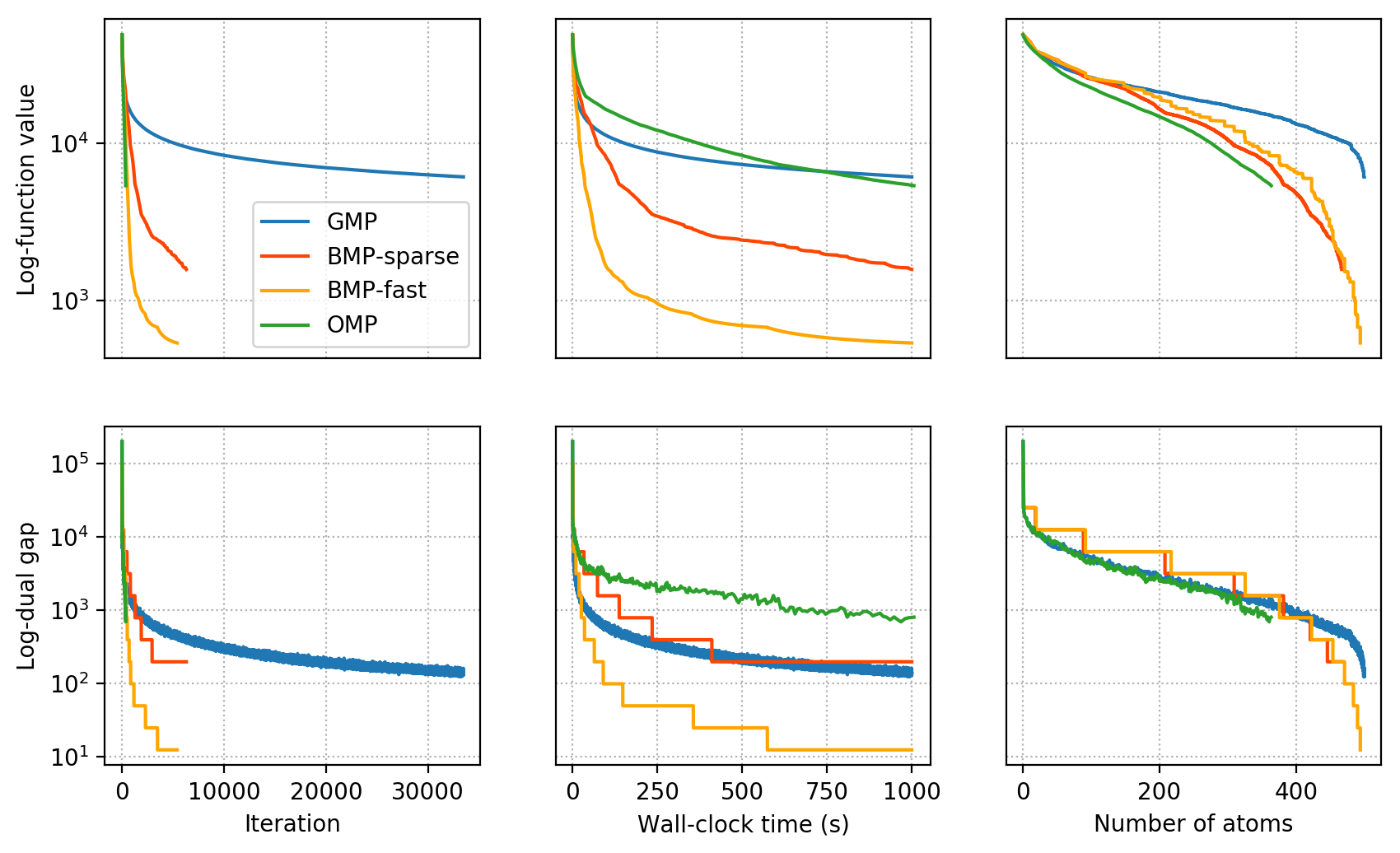}
\caption{Comparison of BMP vs.~GMP and OMP on
$f:x\in\mathbb{R}^{500}\mapsto d_{\overline{\mathcal{B}}(0,1)}(Ax-b)^2$,
with $\eta_\text{sparse}=10$ and $\eta_\text{fast}=2$.}
\label{fig5}
\end{figure}

We provide a sensitivity analysis of BMP to the parameter $\eta$ in Figure~\ref{eta_unitball}. The scaling is not exactly the same as in Figure~\ref{fig5} due to the randomness in the generation of the data. We see that $\eta\sim2$ is an appropriate choice combining the best of speed and sparsity.

\begin{figure}[H]
\centering
\includegraphics[scale=0.6]{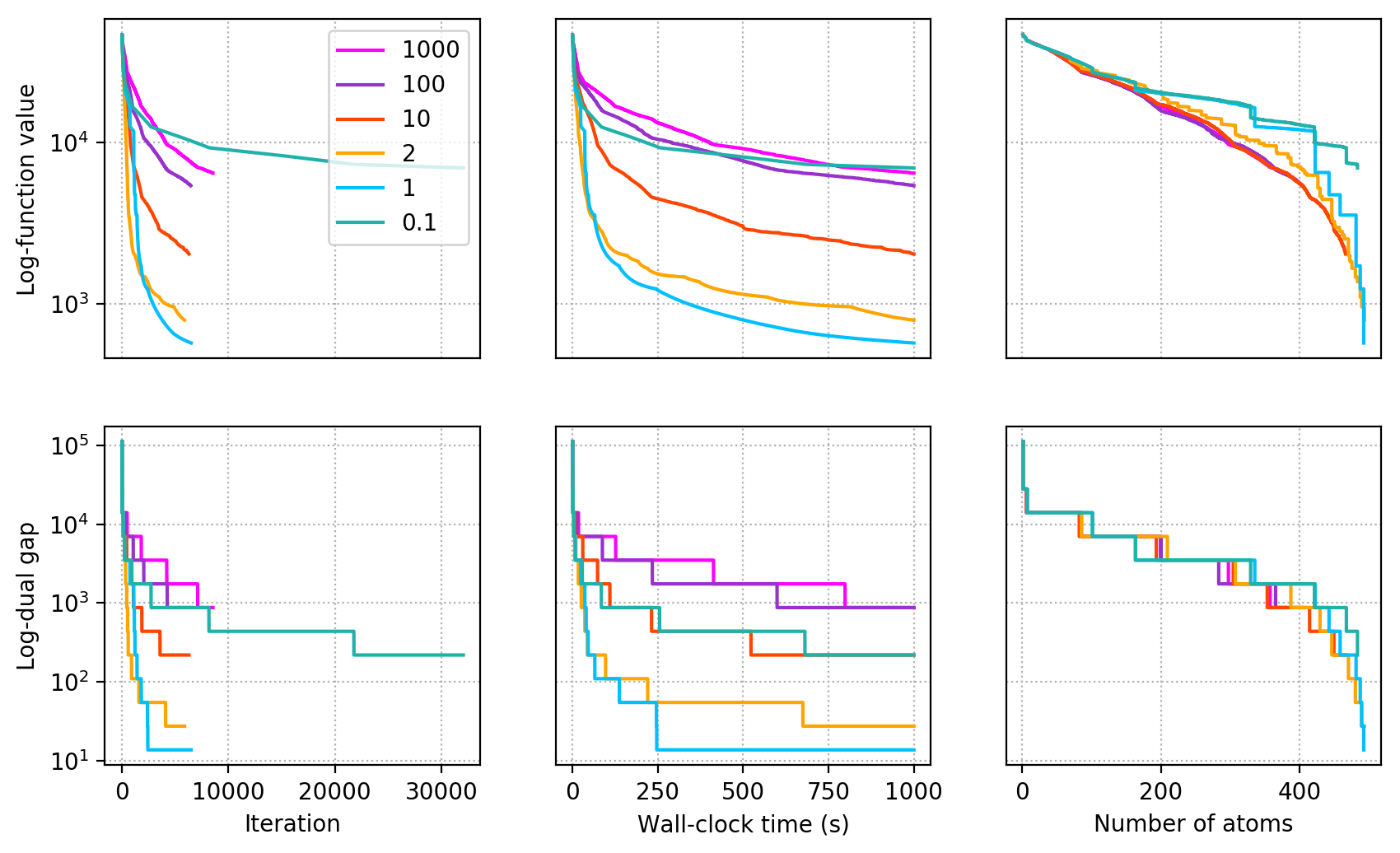}
\caption{Sensitivity of BMP to the parameter $\eta$.}
\label{eta_unitball}
\end{figure}

\newpage
\subsection{Logistic regression}
\label{sec:gisette}

Here we compared BMP vs.~GMP and OMP on the Gisette dataset \citep{guyon05challenge} available at \url{https://archive.ics.uci.edu/ml/datasets/Gisette}. We did not have access to the true parameters $x^*$ so we could not produce the NMSE plots. The objective function is the logistic loss
\begin{align*}
 f:x\in\mathbb{R}^n\mapsto\frac{1}{m}\sum_{i=1}^m\ln\left(\frac{1}{1+e^{-y_ia_i^\top x}}\right)
\end{align*}
with the labels $y_i\in\{-1,1\}$, and the dictionary is the set of signed canonical vectors $\mathcal{D}=\{\pm e_1,\ldots,e_n\}$. We have $n=5000$ and in order to reduce the running time, we chose $m=1000$ and we slightly enhanced the code framework by replacing $\min_{v\in\mathcal{D}'}\langle\nabla f(x_t),v\rangle$ with $\min_{i\in\llbracket1,n\rrbracket}-|\left[\nabla f(x_t)\right]_i|$. Note that this lessens the speed-up provided by the weak-separation oracle in BMP. We represented the dual gaps of BMP by $\max_{i\in\llbracket1,n\rrbracket}|\left[\nabla f(x_t)\right]_i|$, like for GMP and OMP, and thus yielding a similar zig-zag plot.

\begin{figure}[H]
\centering
\includegraphics[scale=0.6]{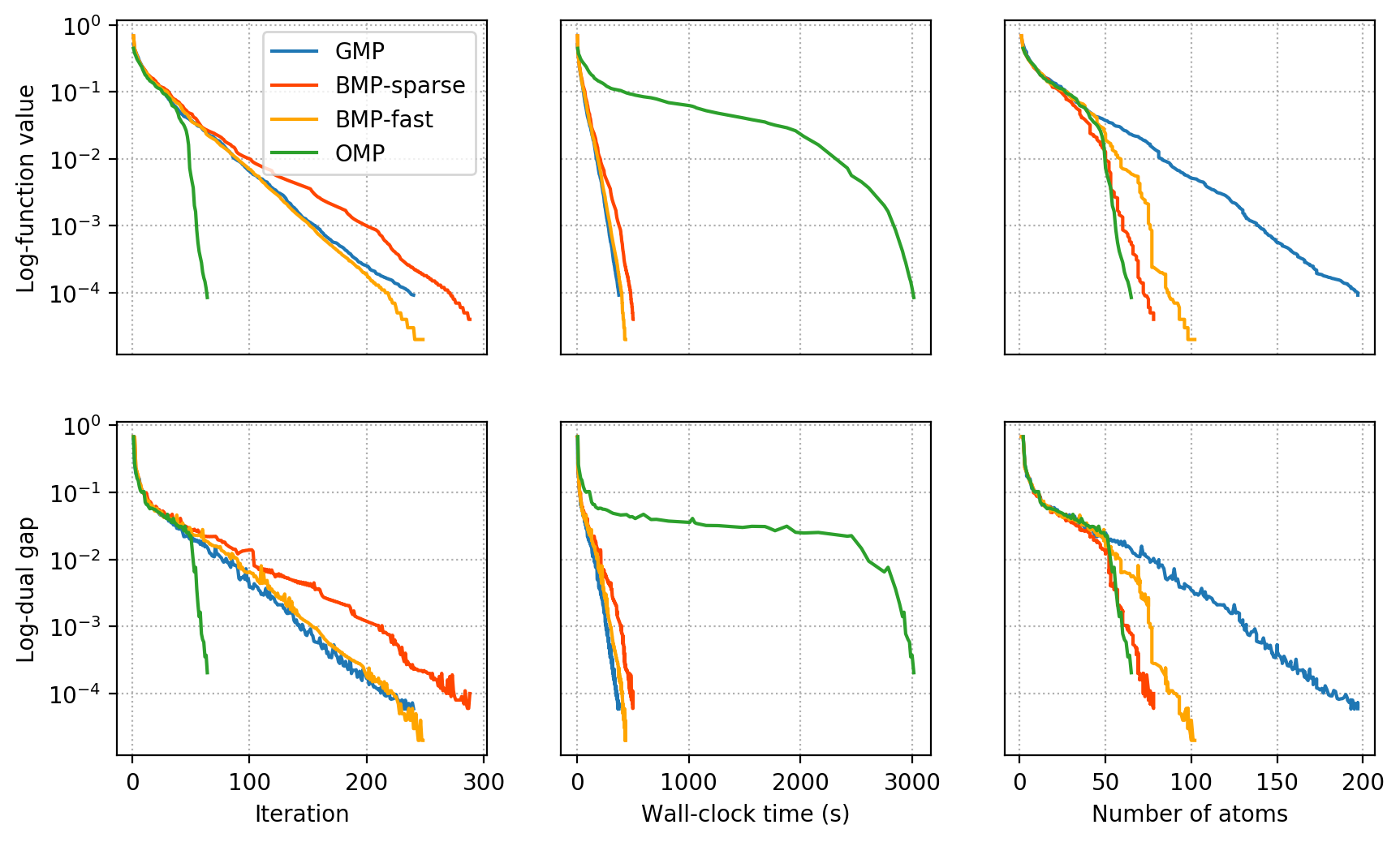}
\caption{Comparison of BMP vs.~GMP and OMP on
the Gisette dataset with $\eta_\text{sparse}=3$ and $\eta_\text{fast}=2$.}
\label{gisette}
\end{figure}

In this situation, Figure~\ref{gisette} shows that BMP converges as fast as GMP while producing iterates with much higher sparsity, equivalent to that of OMP. Hence, BMP hits the sweet spot of speed of convergence and sparsity of the iterates.

\section{Prerequisites for proofs}

\begin{fact}
\label{tn}
 Let $f:\mathcal{H}\rightarrow\mathbb{R}$ be a coercive function and $(x_t)_{t\in\mathbb{N}}$ be a sequence of iterates in $\mathcal{H}$ such that $f(x_{t+1})\leq f(x_t)$ for all $t\in\mathbb{N}$. Then $(x_t)_{t\in\mathbb{N}}$ is bounded.
\end{fact}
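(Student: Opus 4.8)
The plan is to confine the entire sequence to a single sublevel set of $f$, and then use coercivity to conclude that this sublevel set is bounded. The monotone decrease of the function values is what glues all the iterates together into one set, and coercivity is what forces that set to stay within a finite radius.

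First I would exploit the descent condition. Since $f(x_{t+1})\leq f(x_t)$ for all $t\in\mathbb{N}$, a trivial induction gives $f(x_t)\leq f(x_0)$ for every $t$. Hence the whole sequence lies in the sublevel set $\mathcal{L}\coloneqq\{x\in\mathcal{H}\mid f(x)\leq f(x_0)\}$; in particular it suffices to show that $\mathcal{L}$ is bounded.

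Next I would translate coercivity into a boundedness statement for $\mathcal{L}$. Unwinding the definition $\lim_{\|x\|\rightarrow+\infty}f(x)=+\infty$, for the threshold value $M\coloneqq f(x_0)$ there exists $R>0$ such that $\|x\|>R$ implies $f(x)>f(x_0)$. Taking the contrapositive, every $x$ with $f(x)\leq f(x_0)$ must satisfy $\|x\|\leq R$, so $\mathcal{L}\subseteq\overline{\mathcal{B}}(0,R)$. Combining this with the previous step yields $\|x_t\|\leq R$ for all $t\in\mathbb{N}$, which is exactly the desired boundedness.

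There is no real obstacle here: the argument is essentially the standard observation that coercivity is equivalent to having bounded sublevel sets. The only point requiring a little care is applying the $\varepsilon$--$R$ form of the coercivity limit with the inequalities in the right direction (strict $\|x\|>R$ giving $f(x)>f(x_0)$, whose contrapositive delivers the non-strict bound $\|x\|\leq R$ on $\mathcal{L}$), but this is routine.
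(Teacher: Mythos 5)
Your proof is correct, but it takes a different logical route from the paper's. Both arguments begin identically: the trivial induction $f(x_t)\leq f(x_0)$ confines the sequence to the sublevel set $\mathcal{L}=\{x\in\mathcal{H}\mid f(x)\leq f(x_0)\}$. From there the paper argues by contradiction with a subsequence: if $(x_t)_{t\in\mathbb{N}}$ were unbounded, one could extract $\varphi:\mathbb{N}\rightarrow\mathbb{N}$ strictly increasing with $\|x_{\varphi(t)}\|\rightarrow+\infty$, so coercivity would force $f(x_{\varphi(t)})\rightarrow+\infty$, contradicting $\limsup_{t\rightarrow+\infty}f(x_t)\leq f(x_0)<+\infty$. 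You instead unwind the $\varepsilon$--$R$ definition of the coercivity limit at the threshold $M=f(x_0)$ and take the contrapositive, obtaining the explicit containment $\mathcal{L}\subseteq\overline{\mathcal{B}}(0,R)$, hence $\|x_t\|\leq R$ for all $t$. Your version is direct (no contradiction, no subsequence extraction), produces an explicit radius, and isolates the reusable general fact that coercivity is equivalent to boundedness of all sublevel sets; the paper's version trades that for a shorter write-up that never names the sublevel set and only needs the sequential characterization of coercivity. Neither argument requires convexity or continuity of $f$, and both are complete as stated.
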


\begin{proof}
 By assumption, $f(x_t)\leq f(x_0)$ for all $t\in\mathbb{N}$, so $\lim\sup_{t\rightarrow+\infty}f(x_t)\leq f(x_0)<+\infty$. Suppose $(x_t)_{t\in\mathbb{N}}$ is unbounded. Then there exists $\varphi:\mathbb{N}\rightarrow\mathbb{N}$ strictly increasing such that the subsequence $(x_{\varphi(t)})_{t\in\mathbb{N}}$ satisfies $\lim_{t\rightarrow+\infty}\|x_{\varphi(t)}\|=+\infty$. By coercivity, this implies that $\lim_{t\rightarrow+\infty}f(x_{\varphi(t)})=+\infty$, and therefore $\lim\sup_{t\rightarrow+\infty}f(x_t)\geq+\infty$. This is absurd.
\end{proof}

\begin{fact}
\label{fact}
Let $f:\mathcal{H}\rightarrow\mathbb{R}$ be differentiable, $M>0$,
$\mu>1$, and $x,v\in\mathcal{H}$ such that
$\langle\nabla f(x),v\rangle\leq0$. Define
\begin{align*}
 g:\gamma\in\mathbb{R}_+\mapsto f(x)+\gamma\langle\nabla f(x),v\rangle
 +\frac{M}{\mu}\gamma^\mu\|v\|^\mu.
\end{align*}
Then
\begin{align*}
 \min_{\mathbb{R}_+}g
 =f(x)-\frac{\langle-\nabla f(x),v\rangle^{\overline{\mu}}}
 {\overline{\mu}M^{\overline{\mu}-1}\|v\|^{\overline{\mu}}}
\end{align*}
where $\overline{\mu}\coloneqq\mu/(\mu-1)>1$.
\end{fact}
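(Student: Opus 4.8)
The plan is to treat this as a one-dimensional minimization of the explicit function $g$ over $\gamma\in\mathbb{R}_+$. Writing $c\coloneqq\langle-\nabla f(x),v\rangle\geq0$ (nonnegative by the hypothesis $\langle\nabla f(x),v\rangle\leq0$) and $b\coloneqq\frac{M}{\mu}\|v\|^\mu$, we have $g(\gamma)=f(x)-c\gamma+b\gamma^\mu$, a linear term plus a strictly convex power term. First I would dispose of the degenerate case $v=0$: then $c=0$ and $g\equiv f(x)$, so the minimum is $f(x)$, consistent with reading the right-hand side as $0$. Henceforth I assume $v\neq0$, so $b>0$.

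For $v\neq0$, the derivative $g'(\gamma)=-c+b\mu\gamma^{\mu-1}$ is nondecreasing in $\gamma$ (as $\gamma\mapsto\gamma^{\mu-1}$ is nondecreasing for $\mu>1$) and satisfies $g'(0)=-c\leq0$; hence $g$ attains its minimum over $\mathbb{R}_+$ at the unique stationary point $\gamma^*$ solving $g'(\gamma^*)=0$. Solving this equation and using $b\mu=M\|v\|^\mu$ yields $\gamma^*=\left(\frac{c}{M\|v\|^\mu}\right)^{1/(\mu-1)}\in\mathbb{R}_+$, which is the required minimizer.

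It then remains to evaluate $g(\gamma^*)$. The key simplification is that $b(\gamma^*)^\mu=\frac{c\gamma^*}{\mu}$, which follows by writing $(\gamma^*)^\mu=\gamma^*(\gamma^*)^{\mu-1}$, substituting $(\gamma^*)^{\mu-1}=\frac{c}{M\|v\|^\mu}$, and using $b=\frac{M}{\mu}\|v\|^\mu$. Plugging this in gives $g(\gamma^*)=f(x)-c\gamma^*+\frac{c\gamma^*}{\mu}=f(x)-\frac{\mu-1}{\mu}\,c\gamma^*$, and since $\overline{\mu}=\mu/(\mu-1)$ we have $\frac{\mu-1}{\mu}=1/\overline{\mu}$.

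The only remaining (purely bookkeeping) obstacle is re-expressing $c\gamma^*$ in the stated form. Starting from $c\gamma^*=c^{\,1+1/(\mu-1)}\,(M\|v\|^\mu)^{-1/(\mu-1)}$, the relevant exponent identities are $1+\frac{1}{\mu-1}=\overline{\mu}$, $\frac{1}{\mu-1}=\overline{\mu}-1$, and $\mu(\overline{\mu}-1)=\overline{\mu}$; applying them gives $c\gamma^*=\frac{c^{\overline{\mu}}}{M^{\overline{\mu}-1}\|v\|^{\overline{\mu}}}$. Combining with the factor $1/\overline{\mu}$ produces exactly $f(x)-\frac{c^{\overline{\mu}}}{\overline{\mu}M^{\overline{\mu}-1}\|v\|^{\overline{\mu}}}$, which is the claim once $c$ is rewritten as $\langle-\nabla f(x),v\rangle$. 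No step is conceptually hard; the only care needed is consistent tracking of these exponents.
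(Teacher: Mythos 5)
Your proof is correct and follows essentially the same route as the paper's: set the derivative of the one-dimensional function $g$ to zero, solve for the minimizer $\gamma^*=\left(\frac{\langle-\nabla f(x),v\rangle}{M\|v\|^\mu}\right)^{1/(\mu-1)}$, and substitute back using the exponent identities $\frac{1}{\mu-1}=\overline{\mu}-1$, $\mu(\overline{\mu}-1)=\overline{\mu}$, and $1-\frac{1}{\mu}=\frac{1}{\overline{\mu}}$. Your shortcut $b(\gamma^*)^\mu=\frac{c\gamma^*}{\mu}$ and the explicit treatment of the degenerate case $v=0$ are minor tidy additions, not a different argument.
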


\begin{proof}
 Let $\overline{\mu}=\mu/(\mu-1)$. We have $\overline{\mu}-1=1/(\mu-1)$,
 and $g$ is differentiable with
 \begin{align*}
  \forall\gamma\in\mathbb{R}_+,\,g'(\gamma)\geq0
  &\Leftrightarrow\langle\nabla f(x),v\rangle+M\gamma^{\mu-1}\|v\|^\mu\geq0\\
  &\Leftrightarrow\gamma\geq\left(\frac{\langle-\nabla f(x),v\rangle}{M\|v\|^\mu}\right)^{1/(\mu-1)}
=\frac{\langle-\nabla f(x),v\rangle^{\overline{\mu}-1}}{M^{\overline{\mu}-1}\|v\|^{\overline{\mu}}}.
 \end{align*}
 Therefore, using $\mu(\overline{\mu}-1)=\overline{\mu}$
 and $1-1/\mu=1/\overline{\mu}$,
 \begin{align*}
  \min_{\mathbb{R}_+}g
  &=f(x)+\frac{\langle-\nabla f(x),v\rangle^{\overline{\mu}-1}}
 {M^{\overline{\mu}-1}\|v\|^{\overline{\mu}}}\nabla f(x)v
+\frac{M}{\mu}\left(\frac{\langle-\nabla f(x),v\rangle^{\overline{\mu}-1}}
{M^{\overline{\mu}-1}\|v\|^{\overline{\mu}}}\right)^{\mu}\|v\|^\mu\\
&=f(x)-\frac{\langle-\nabla f(x),v\rangle^{\overline{\mu}}}
{M^{\overline{\mu}-1}\|v\|^{\overline{\mu}}}
+\frac{1}{\mu}\frac{\langle-\nabla f(x),v\rangle^{\mu(\overline{\mu}-1)}}
{M^{\mu(\overline{\mu}-1)-1}\|v\|^{\mu(\overline{\mu}-1)}}\\
&=f(x)-\left(1-\frac{1}{\mu}\right)\frac{\langle-\nabla f(x),v\rangle^{\overline{\mu}}}
{M^{\overline{\mu}-1}\|v\|^{\overline{\mu}}}\\
&=f(x)-\frac{\langle-\nabla f(x),v\rangle^{\overline{\mu}}}
{\overline{\mu}M^{\overline{\mu}-1}\|v\|^{\overline{\mu}}}.
\end{align*}
\end{proof}

\begin{corollary}
\label{cor}
Let $f:\mathcal{H}\rightarrow\mathbb{R}$ 
be $S$-strongly convex of order $s=2$
with $\{x^*\}\coloneqq\argmin_\mathcal{H}f$.
Then for all $x\in\mathcal{H}$,
\begin{align*}
 f(x^*)
 \geq f(x)-\frac{\langle\nabla f(x),x-x^*\rangle^2}
 {2S\|x-x^*\|^{2}}.
\end{align*}
\end{corollary}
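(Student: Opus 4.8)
The plan is to exploit the quadratic lower bound furnished by strong convexity of order $s=2$ and to minimize it along the line joining $x$ to $x^*$. I assume throughout that $x\neq x^*$, so that the right-hand side is well defined. I first set $v\coloneqq x^*-x$ and record that $\langle\nabla f(x),v\rangle\leq0$: indeed, convexity together with optimality of $x^*$ gives $\langle\nabla f(x),x^*-x\rangle\leq f(x^*)-f(x)\leq0$.

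Next I would instantiate strong convexity of order $2$ at $y=x+\gamma v$, obtaining for every $\gamma\in\mathbb{R}$ the bound $f(x+\gamma v)\geq f(x)+\gamma\langle\nabla f(x),v\rangle+\frac{S}{2}\gamma^2\|v\|^2$. The right-hand side is precisely the function $g$ of Fact~\ref{fact} with $\mu=2$ and $M=S$ (so that $\overline{\mu}=2$), and its minimizer $\gamma^*=\langle-\nabla f(x),v\rangle/(S\|v\|^2)$ is nonnegative since $\langle-\nabla f(x),v\rangle\geq0$; hence the minimum over $\mathbb{R}_+$ coincides with the minimum over $\mathbb{R}$, and equals $f(x)-\frac{\langle-\nabla f(x),v\rangle^2}{2S\|v\|^2}=f(x)-\frac{\langle\nabla f(x),x-x^*\rangle^2}{2S\|x-x^*\|^2}$, exactly the claimed right-hand side.

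To finish, I evaluate the strong-convexity bound at $\gamma=1$, where $x+v=x^*$, to get $f(x^*)\geq g(1)\geq\min_\gamma g(\gamma)$, the last inequality holding because $g$ is a strictly convex quadratic (here $\|v\|>0$). Chaining these yields $f(x^*)\geq f(x)-\frac{\langle\nabla f(x),x-x^*\rangle^2}{2S\|x-x^*\|^2}$. Equivalently, one can bypass Fact~\ref{fact} and complete the square directly: writing $a\coloneqq\langle\nabla f(x),x^*-x\rangle$ and $b\coloneqq\|x-x^*\|$, the gap between $g(1)$ and the target is $a+\frac{S}{2}b^2+\frac{a^2}{2Sb^2}=\frac{(a+Sb^2)^2}{2Sb^2}\geq0$.

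There is no genuine obstacle here; the proof is a short computation. The only points needing care are the sign conventions (ensuring $\langle-\nabla f(x),v\rangle\geq0$ so that the minimizer lands in $\mathbb{R}_+$ and Fact~\ref{fact} applies verbatim) and the observation that the asserted bound is the minimum of the quadratic along the \emph{specific} direction $x^*-x$, which is strictly sharper than the Polyak-type bound $f(x^*)\geq f(x)-\|\nabla f(x)\|^2/(2S)$ obtained by minimizing over all directions.
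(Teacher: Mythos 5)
Your proof is correct and follows essentially the same route as the paper's: instantiate strong convexity along the segment from $x$ to $x^*$, recognize the resulting quadratic as the $g$ of Fact~\ref{fact} with $\mu=2$ and $M=S$, and evaluate at $\gamma=1$. Your additional points of care (excluding $x=x^*$ so the right-hand side is defined, checking $\langle\nabla f(x),x^*-x\rangle\leq0$ explicitly, and the inline completing-the-square alternative) are sound refinements of the same argument rather than a different approach.
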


\begin{proof}
 Let $x\in\mathcal{H}$. By strong convexity, for all $\gamma\in\mathbb{R}_+$,
\begin{align}
 f(x+\gamma(x^*-x))
 \geq f(x)+\gamma\langle\nabla f(x),x^*-x\rangle
 +\frac{S}{2}\gamma^2\|x^*-x\|^2.\label{cor1}
\end{align}
Let $v\coloneqq x^*-x$, then $\langle\nabla f(x),v\rangle\leq0$ by convexity. Applying Fact~\ref{fact} to the right-hand side of \eqref{cor1}, since $\overline{s}=s/(s-1)=2$,
\begin{align*}
 f(x+\gamma(x^*-x))\geq
 f(x)-\frac{\langle\nabla f(x),x-x^*\rangle^2}
 {2S\|x-x^*\|^2}
\end{align*}
so, with $\gamma=1$,
\begin{align*}
 f(x^*)
 \geq f(x)-\frac{\langle\nabla f(x),x-x^*\rangle^2}
 {2S\|x-x^*\|^2}.
\end{align*}
\end{proof}

\section{The smooth strongly convex case}
\label{first}

\begin{theorem}[(Smooth strongly convex case)]
\label{th:first}
Let $\mathcal{D}\subset\mathcal{H}$
be a dictionary such that $0\in\operatorname{int}(\operatorname{conv}(\mathcal{D}'))$ and let $f:\mathcal{H}\rightarrow\mathbb{R}$ be $L$-smooth of order $\ell=2$ and $S$-strongly convex of order $s=2$. Then the Blended Matching Pursuit algorithm (Algorithm~\ref{bgmp}) ensures that $f(x_t)-\min_\mathcal{H}f\leq\epsilon$ for all $t\geq T$ where
\begin{align*}
 T=\mathcal{O}\left(\frac{L}{S}\ln\left(\frac{|\phi_0|}{\epsilon}\right)\right).
\end{align*}
Moreover, $\|x_t-x^*\|\rightarrow0$ as $t\rightarrow+\infty$ at same rate.
\end{theorem}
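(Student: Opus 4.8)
The plan is to treat this as the $\ell=2$, $s=2$ instance of the machinery behind Theorem~\ref{th:bgmp}, but to exploit the quadratic structure to obtain a closed-form, localization-free argument (this is precisely what makes the proof simpler). Strong convexity of order $2$ already guarantees a \emph{unique} minimizer $x^*$ and coercivity, so $\argmin_\mathcal{H}f=\{x^*\}$ is automatic and no separate coercivity hypothesis is needed. As in the proofs of Theorems~\ref{th:convex} and~\ref{th:bgmp}, I would set $\epsilon_t\coloneqq f(x_t)-f(x^*)$, partition the iterations into dual, full, and constrained steps, introduce \emph{epoch starts} after each dual step, and then bound both the number of epochs and the number of iterations inside each epoch.

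The first step is a clean primal bound. Since $0\in\operatorname{int}(\operatorname{conv}(\mathcal{D}'))$, I fix $r>0$ with $\mathcal{B}(0,r)\subseteq\operatorname{conv}(\mathcal{D}')$ and test the feasible point $\frac{r(x^*-x_t)}{2\|x^*-x_t\|}$ against the Frank-Wolfe atom $v_t^{\text{FW}}$, which gives, exactly as in \eqref{0:r1},
\[
\langle\nabla f(x_t),x_t-x^*\rangle\leq\frac{2\|x_t-x^*\|}{r}\big\langle-\nabla f(x_t),v_t^{\text{FW}}\big\rangle.
\]
Feeding this into Corollary~\ref{cor} (valid since $s=2$), the factor $\|x_t-x^*\|^2$ cancels and I obtain the distance-free bound
\[
\epsilon_t\leq\frac{2}{Sr^2}\big\langle-\nabla f(x_t),v_t^{\text{FW}}\big\rangle^2.
\]
This is the crucial simplification over Theorem~\ref{th:bgmp}: no localization radius $\rho$ and no appeal to Fact~\ref{tn} for boundedness of the iterates are required. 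On a dual step the negative oracle call certifies $\big\langle-\nabla f(x_t),v_t^{\text{FW}}\big\rangle\leq|\phi_t|=|\phi_0|/\tau^{n_{\text{dual}}}$, so $\epsilon_t\leq\frac{2|\phi_0|^2}{Sr^2\tau^{2n_{\text{dual}}}}$; since a dual step only occurs while $\epsilon_t>\epsilon$, this yields $N_{\text{dual}}=\mathcal{O}(\ln(|\phi_0|/\epsilon))$, i.e.\ $\mathcal{O}(\ln 1/\epsilon)$ epochs.

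Next I would count iterations within an epoch. With $\ell=2$ one has $\overline{\ell}=2$, so the per-step progress bounds \eqref{full} and \eqref{constrained} both read $f(x_t)-f(x_{t+1})\geq\frac{2|\phi_t|^2}{\max\{\kappa^2,\eta^2\}LD_{\mathcal{D}'}^2}$. At an epoch start $t$ we have $\phi_t=\phi_{t-1}/\tau$ and $x_t=x_{t-1}$, so the primal bound gives $\epsilon_t\leq\frac{2\tau^2}{Sr^2}|\phi_t|^2$, while $\phi_s=\phi_t$ stays constant throughout the epoch. Since the total primal progress over the epoch cannot exceed $\epsilon_t$, dividing the two estimates makes the $|\phi_t|^2$ factors cancel and bounds the number of non-dual steps in the epoch by the \emph{constant} $\frac{\tau^2\max\{\kappa^2,\eta^2\}LD_{\mathcal{D}'}^2}{Sr^2}=\mathcal{O}(L/S)$, independent of the epoch. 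Multiplying by the $\mathcal{O}(\ln 1/\epsilon)$ epochs and adding $N_{\text{dual}}$ then gives $T=\mathcal{O}\big(\tfrac{L}{S}\ln(|\phi_0|/\epsilon)\big)$. For the final claim, strong convexity applied at $x^*$ (where $\nabla f(x^*)=0$) gives $\frac{S}{2}\|x_t-x^*\|^2\leq\epsilon_t$, so $\|x_t-x^*\|\to0$ at the same rate.

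The main obstacle is really just recognizing and engineering the two cancellations special to $\ell=2$, $s=2$: the $\|x_t-x^*\|^2$ that drops out of the primal bound via Corollary~\ref{cor}, and the $|\phi_t|^2$ that drops out of the per-epoch count because $\overline{\ell}=2$ matches the exponent $1/(1-\theta)=2$ produced by strong convexity (equivalently $\ell\theta=1$). Once these are in place, the remainder is the same bookkeeping as in Theorem~\ref{th:convex}; the only point to verify carefully is that the geometric inequality and Corollary~\ref{cor} are combined in the correct order so that the quadratic powers line up.
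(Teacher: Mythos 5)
Your proposal is correct and follows essentially the same route as the paper's own proof: the geometric bound obtained by testing $\frac{r(x^*-x_t)}{2\|x^*-x_t\|}$ against $v_t^{\text{FW}}$, combined with Corollary~\ref{cor} so that $\|x_t-x^*\|^2$ cancels to give $\epsilon_t\leq\frac{2}{Sr^2}\langle\nabla f(x_t),v_t^{\text{FW}}\rangle^2$, then the $\mathcal{O}(\ln(|\phi_0|/\epsilon))$ bound on dual steps, the constant $\mathcal{O}(L/S)$ per-epoch step count from the matching quadratic exponents, and strong convexity at $x^*$ for the distance claim. The only difference is cosmetic: you make explicit that strong convexity supplies coercivity and uniqueness of the minimizer, which the paper leaves implicit.
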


\begin{proof}
Let $\epsilon>0$ and $T=N_\text{dual}+N_\text{full}+N_\text{constrained}$ where $N_\text{dual}$, $N_\text{full}$, and $N_\text{constrained}$ are the number of dual steps (Line~\ref{stationary_step}), full steps (Line~\ref{full_step}), and constrained steps (Line~\ref{constrained_step}) taken in total respectively, and $\epsilon_t\coloneqq f(x_t)-\min_\mathcal{H}f$. Similarly to \citet{pok17lazy}, we introduce \emph{epoch starts} at iteration $t=0$ or any iteration immediately following a dual step. Our goal is to bound the number of epochs and the number of iterations within each epoch. Notice that $0\leq\epsilon_{t+1}\leq\epsilon_t$ and $\phi_t\leq\phi_{t+1}\leq0$ for $t\in\mathbb{N}$.

Denote $\{x^*\}\coloneqq\argmin_\mathcal{H}f$. Let $t\in\mathbb{N}$ be an iteration of the algorithm and $v_t^\text{FW}\in\argmin_{v\in\mathcal{D}'}\langle\nabla f(x_t),v\rangle=\argmin_{z\in\operatorname{conv}(\mathcal{D}')}\langle\nabla f(x_t),z\rangle$. We can assume that $f(x_t)>f(x^*)$ otherwise the iterates have already converged. By convexity, $\langle\nabla f(x_t),x^*-x_t\rangle<0$. Since $0\in\operatorname{int}(\operatorname{conv}(\mathcal{D}'))$, there exists $r>0$ such that $\mathcal{B}(0,r)\subseteq\operatorname{conv}(\mathcal{D}')$. Therefore, $\frac{r(x^*-x_t)}{2\|x^*-x_t\|}\in\operatorname{conv}(\mathcal{D}')$ so
\begin{align*}
\min\limits_{z\in\operatorname{conv}(\mathcal{D}')}\langle\nabla f(x_t),z\rangle
&=\big\langle\nabla f(x_t),v_t^\text{FW}\big\rangle
\leq\left\langle\nabla f(x_t),\frac{r(x^*-x_t)}{2\|x^*-x_t\|}\right\rangle<0
\end{align*}
and it follows that
\begin{align}
\frac{\langle\nabla f(x_t),x_t-x^*\rangle}{\|x_t-x^*\|}
\leq-\frac{2}{r}\big\langle\nabla f(x_t),v_t^\text{FW}\big\rangle.
 \label{1:r1}
\end{align}
By Corollary~\ref{cor},
\begin{align*}
 f(x^*)\geq f(x_t)-\frac{\langle\nabla f(x_t),x_t-x^*\rangle^2}{2S\|x_t-x^*\|^2}.
\end{align*}
Combining with \eqref{1:r1} we obtain
\begin{align}
 \epsilon_t=f(x_t)-f(x^*)\leq\frac{2}{r^2S}\big\langle\nabla f(x_t),v_t^\text{FW}\big\rangle^2.
 \label{1:up_all}
\end{align}

Let $t$ be a dual step (Line~\ref{stationary_step}). Then the weak-separation oracle call (Line~\ref{call}) yields $\big\langle\nabla f(x_t),v_t^\text{FW}\big\rangle\geq\phi_t$. By \eqref{1:up_all}
and Line~\ref{tau},
\begin{align}
 \epsilon_t
 &\leq\frac{2}{r^2S}\phi_t^2\label{1:up_stationary}\\
 &=\frac{2}{r^2S}\left(\frac{\phi_0}
 {\tau^{n_\text{dual}}}\right)^2\label{1:t_stat}
\end{align}
where $n_\text{dual}$ is the number of dual steps taken before $t$. Therefore, by \eqref{1:t_stat} and since $\tau>1$, $N_\text{dual}$ is finite with
\begin{align}
N_\text{dual}\leq\ceil[\bigg]{\frac{1}{2}
\log_\tau\left(\frac{2\phi_0^2}{r^2S\epsilon}\right)}.
 \label{1:stationary}
\end{align}

If a full step is taken (Line~\ref{full_step}), then the weak-separation oracle (Line~\ref{call}) returns $v_t\in\mathcal{D}'$ such that $\langle\nabla f(x_t),v_t\rangle\leq\phi_t/\kappa$. By smoothness,
\begin{align*}
 f(x_{t+1})
 &=\min\limits_{\gamma\in\mathbb{R}}
 f(x_t+\gamma v_t)\\
 &\leq\min\limits_{\gamma\in\mathbb{R}}f(x_t)
 +\gamma\langle\nabla f(x_t),v_t\rangle
 +\frac{L}{2}\gamma^2\|v_t\|^2\\
 &=f(x_t)-\frac{\langle\nabla f(x_t),v_t\rangle^{2}}
 {2L\|v_t\|^{2}}\\
 &\leq f(x_t)-\frac{(\phi_t/\kappa)^{2}}
 {2L(D_{\mathcal{D}'}/2)^{2}}
\end{align*}
where we used $\|v_t\|\leq D_{\mathcal{D}'}/2$ (by symmetry). Therefore, the primal progress is at least
\begin{align}
 f(x_t)-f(x_{t+1})\geq\frac{2\phi_t^2}
 {\kappa^2LD_{\mathcal{D}'}^2}.\label{1:full}
\end{align}

Lastly, if a constrained step is taken (Line~\ref{constrained_step}), then by smoothness,
\begin{align*}
 f(x_{t+1})
 &=\min\limits_{\gamma\in\mathbb{R}}
 f\big(x_t+\gamma\widetilde{\nabla}f(x_t)\big)\\
 &\leq\min\limits_{\gamma\in\mathbb{R}}f(x_t)
 +\gamma\big\langle\nabla f(x_t),\widetilde{\nabla}f(x_t)\big\rangle
 +\frac{L}{2}\gamma^2\big\|\widetilde{\nabla}f(x_t)\big\|^2\\
 &=f(x_t)-\frac{\big\langle\nabla f(x_t),\widetilde{\nabla}f(x_t)\big\rangle^2}
 {2L\big\|\widetilde{\nabla}f(x_t)\big\|^{2}}\\
 &=f(x_t)-\frac{\big\|\widetilde{\nabla}f(x_t)\big\|^{2}}
 {2L}\\
 &\leq f(x_t)-\frac{\big\langle\widetilde{\nabla}f(x_t),
 v_t^{\text{FW-}\mathcal{S}}\big\rangle^{2}}
 {2L\big\|v_t^{\text{FW-}\mathcal{S}}\big\|^{2}}\\
 &=f(x_t)-\frac{\big\langle\nabla f(x_t),
 v_t^{\text{FW-}\mathcal{S}}\big\rangle^{2}}
 {2L\big\|v_t^{\text{FW-}\mathcal{S}}\big\|^{2}}\\
 &\leq f(x_t)-\frac{(\phi_t/\eta)^{2}}
 {2L(D_{\mathcal{D}'}/2)^{2}}
\end{align*}
where the last three lines respectively come from the Cauchy-Schwarz inequality, $v_t^{\text{FW-}\mathcal{S}}\in\operatorname{span}(\mathcal{S}_t)$, $\big\langle\nabla f(x_t),v_t^{\text{FW-}\mathcal{S}}\big\rangle\leq\phi_t/\eta$ (Line~\ref{criterion}), and $\big\|v_t^{\text{FW-}\mathcal{S}}\big\|\leq D_{\mathcal{D}'}/2$ (by symmetry). Therefore, the primal progress is at least
\begin{align}
 f(x_t)-f(x_{t+1})\geq\frac{2\phi_t^{2}}
 {\eta^{2}LD_{\mathcal{D}'}^{2}}.\label{1:constrained}
\end{align}
whose lower bound only differs by a constant factor $(\kappa/\eta)^2$ from that of a full step \eqref{1:full}.

Now, we have
\begin{align}
T&=N_\text{dual}+N_\text{full}+N_\text{constrained}\nonumber\\
&=N_\text{dual}
+\sum_{\substack{t=0\\t\text{ epoch start}}}^{T-1}\Big(N_\text{full}^{(t)}
+N_\text{constrained}^{(t)}\Big)\label{1:count}
\end{align}
where $N_\text{full}^{(t)}$ and $N_\text{constrained}^{(t)}$ are the
number of full steps and constrained steps taken during epoch $t$ respectively. Let $t>0$ be an epoch start. Thus, $t-1$ is a dual step. By \eqref{1:up_stationary}, since $x_t=x_{t-1}$
and $\phi_t=\phi_{t-1}/\tau$,
\begin{align}
 \epsilon_t\leq\frac{2\tau^2\phi_t^2}{r^2S}.\label{1:epoch_up}
\end{align}
This also holds for $t=0$ by \eqref{1:up_all} and Line~\ref{phi0}
(and actually for all $t\in\mathbb{N}$). By \eqref{1:full} and \eqref{1:constrained}, since $\phi_s=\phi_t$ for all nondual steps $s$ in the epoch starting at $t$,
\begin{align}
\epsilon_t&\geq\sum_{s\in\text{epoch}(t)}\big(f(x_s)-f(x_{s+1})\big)\nonumber\\
 &\geq\left(N_\text{full}^{(t)}+N_\text{constrained}^{(t)}\right)
 \frac{2\phi_t^2}{\max\{\kappa^2,\eta^2\}LD_{\mathcal{D}'}^2}.
 \label{1:epoch_down}
\end{align}
Combining \eqref{1:epoch_up} and \eqref{1:epoch_down},
\begin{align}
 N_\text{full}^{(t)}+N_\text{constrained}^{(t)}
 \leq\frac{\tau^2\max\{\kappa^{2},\eta^{2}\}LD_{\mathcal{D}'}^{2}}{r^2S}.
 \label{1:full_cons}
\end{align}
Therefore, by \eqref{1:count} and \eqref{1:full_cons},
\begin{align*}
 T&\leq N_\text{dual}+(N_\text{dual}+1)
 \frac{\tau^2\max\{\kappa^{2},\eta^{2}\}LD_{\mathcal{D}'}^{2}}{r^2S}.
\end{align*}
By \eqref{1:stationary}, we conclude that the algorithm converges with
\begin{align*}
T=\mathcal{O}\left(\frac{L}{S}\ln\left(\frac{|\phi_0|}{\epsilon}\right)\right).
\end{align*}

Finally, by strong convexity and since $\nabla f(x^*)=0$,
\begin{align*}
 \frac{S}{2}\|x_t-x^*\|^2&\leq f(x_t)-f(x^*)-\langle\nabla f(x^*),x_t-x^*\rangle
 =\epsilon_t
\end{align*}
for all $t\in\mathbb{N}$. Thus, $\|x_t-x^*\|\rightarrow0$ as $t\rightarrow+\infty$.
\end{proof}

\end{document}